\newtheorem{cor}{Corollary}[section]
\newtheorem{exam}{Example}[section]
\newtheorem{rem}{Remark}[section]
\begin{document}

\title*{Ore extensions of functional algebras}
\author{Alex Behakanira Tumwesigye, Johan Richter, Sergei Silvestrov }
\institute{Alex Behakanira Tumwesigye \at Department of Mathematics, College of Natural Sciences, Makerere University, Box 7062, Kampala, \email{alexbt@cns.mak.ac.ug}
\and Johan Richter \at Division of Applied Mathematics, UKK, M\"{a}lardalens University, Box 883, V\"{a}ster\.{a}s  \email{johan.richter@mdh.se}
\and Sergei Silvestrov \at Division of Applied Mathematics, UKK, M\"{a}lardalens University, Box 883, V\"{a}ster\.{a}s  \email{sergei.silvestrov@mdh.se}}
%
%
\maketitle

\abstract*{In this article we consider the  Ore extension algebra for the algebra  $\mathcal{A}$ of functions with finite support on a countable set. We derive explicit formulas for twisted derivations on $\mathcal{A},$ give a description for the centralizer of $\mathcal{A},$ and the center of the Ore extension algebra under specific conditions.}

\abstract{In this article we consider the  Ore extension algebra for the algebra  $\mathcal{A}$ of functions with finite support on a countable set. We derive explicit formulas for twisted derivations on $\mathcal{A},$ give a description for the centralizer of $\mathcal{A},$ and the center of the Ore extension algebra under specific conditions.}




\section{Introduction}
A topic of interest in the field of operator algebras is the connection between properties of dynamical systems and algebraic properties of crossed products associated to them. More specifically the question when  
a certain canonical subalgebra is maximal commutative and has the ideal intersection property, that is, each non-zero ideal of the algebra intersects the subalgebra non-trivially. For a topological dynamical systems $(X,\sigma),$ 
one may define a crossed product C*-algebra $C(X) \rtimes_{\tilde{\sigma}} \mathbb{Z} $ where 
$\tilde{\sigma}$ is an automorphism of $C(X)$ induced by $\sigma$. It turns out that the property known as topological freeness of the dynamical system is equivalent to $C(X)$ being a maximal commutative subalgebra of 
$C(X) \rtimes_{\tilde{\sigma}} \mathbb{Z}$ and also equivalent to the condition that every non-trivial closed ideal has a non-zero intersection with $C(X)$. An excellent reference for this correspondence is \cite{TomiyamaABT}.
For analogues, extensions and applications of this theory in the study of dynamical systems, harmonic analysis, quantum field theory, string theory, integrable systems, fractals and wavelets, see \cite{ArchboldABT,CarlsenABT,Dutkay1ABT,Dutkay2ABT,ExelABT,
MackeyABT,OstroABT,TomiyamaABT}.

For any class of graded rings, including gradings given by semigroups or even filtered rings (e.g. Ore extensions), it makes sense to ask whether the ideal intersection property is related to maximal commutativity of the degree zero component. 
For crossed product-like structures, where one has a natural action, it further makes sense to ask how the above mentioned properties of the degree zero component are related to properties of the action. These questions have been considered recently for algebraic crossed products and Banach algebra crossed products, both in the traditional context of crossed products by groups as well as generalizations to crossed products by groupoids and general categories \cite{DeABT,LundABT,Alex1ABT,AlexABT,SvenssonABT,Silvestrov1ABT,Oinert1ABT,OinertABT}.

Ore extensions constitute an important class of rings, appearing in extensions of differential calculus, in non-commutative geometry, in quantum groups and algebras and as a uniting framework for many algebras
appearing in physics and engineering models. An Ore extension of a ring $R$ is an overring with a generator $x$ satisfying $xr=\sigma(r)x+\Delta(r), \ r \in R,$ for some endomorphism $\sigma$ and a 
 $\sigma$-derivation $\Delta$. \par 
 This article aims at giving a description of the centralizer and the center of the coefficient subalgebra $\mathcal{A}$ in the Ore extension algebra $\mathcal{A}[x,\tilde{\sigma},\Delta],$ where $\mathcal{A}$ is the algebra of functions with finite support on a countable set $X$  and $\tilde{\sigma}:\mathcal{A}\to \mathcal{A}$ is an automorphism of $\mathcal{A}$ that is induced by a bijection $\sigma:X\to X$. A number of studies on centralizers in Ore extensions have been carried out before in \cite{RichterABT,Richter2ABT,Richter1ABT}, but this is in completely different settings the one here.\par 
 In section \ref{OresecTumwesigye}, we recall some notation and basic facts about Ore extensions used through out the rest of the article. In section \ref{secderivationsTumwesigye}, we give a description of twisted derivations on the algebra of functions on a finite set from which it is observed that there are no non trivial derivations on $\mathbb{R}^n.$ In section \ref{seccentralTumwesigye}, we give the description of the centralizer of the coefficient algebra $\mathcal{A}$ and the center of the Ore extension $\mathcal{A}[x,\tilde{\sigma},0].$ In section \ref{sec5Tumwesigye}, we turn to the case when $\mathcal{A}$ is the algebra of functions on a countable set with finite support, give a description for the centralizer and the center of the Ore extension $\mathcal{A}[x,\tilde{\sigma},0].$ Sections \ref{sec6Tumwesigye} and \ref{sec7Tumwesigye} are devoted to the special case of the skew power series and the skew Laurent rings respectively.
 \section{Ore extensions. Basic preliminaries}\label{OresecTumwesigye}
 For general references on Ore extensions, see for example, \cite{GoodearlABT,RowenABT}. For the convenience of the reader, we recall the definition.\par 
  Let $R$ be a ring , 
$\sigma : R \to R$ a ring endomorphism (not necessarily injective) and $\Delta : R \to R$ a $\sigma$-derivation, that is,
\begin{displaymath}
	\Delta(a+b)=\Delta(a)+\Delta(b)
	\quad \text{ and } \quad
	\Delta(ab) = \sigma(a) \Delta(b) + \Delta(a)b
\end{displaymath}
for all $a,b\in R$.

\begin{definition}
The Ore extension $R[x,\sigma,\Delta]$ is defined as the ring generated by $R$ and an element $x \notin R$ such that $1,x, x^2, \ldots$ form a basis for $R[x,\sigma,\Delta]$ as a left $R$-module and all $r \in R$ satisfy
\begin{equation}\label{Eqn1Tumwesigye}
	x r = \sigma(r)x + \Delta(r).
\end{equation}
\end{definition}
Such a ring always exists and is unique up to isomorphism \cite{GoodearlABT}.\par  
From $\Delta(1 \cdot 1) = \sigma(1) \cdot 1 +\Delta(1) \cdot 1$ we get that $\Delta(1)=0$, and since $\sigma(1)=1$ we see that $1_R$ will be the multiplicative identity for $R[x,\sigma,\Delta]$ as well. 

If $\sigma= id_R$, then we say that $R[x,id_R,\Delta]$ is a \emph{differential polynomial ring}.
 If instead $\Delta \equiv 0$, then we say that $R[x,\sigma,0]$ is a 
\emph{skew polynomial ring}. The reader should be aware that some authors use the term \emph{skew polynomial ring} to mean  Ore extensions.

An arbitrary non-zero element $P \in R[x,\sigma,\Delta]$ can be written uniquely
as $P = \sum\limits_{i=0}^n a_i x^i$ for some $n \in \mathbb{Z}_{\geq 0}$, with $a_i \in R$ for $i \in \{0,1,\ldots, n\}$ and $a_n \neq 0$. The \emph{degree} of $P$ will be defined as; $\deg(P):=n$. We set $\deg(0) := -\infty$.

\begin{definition}
 A $\sigma$-derivation $\Delta$ is said to be \emph{inner} if there exists some $a \in R$ such that $\Delta(r) = ar-\sigma(r)a$ for all $r \in R$. A $\sigma$-derivation that is not inner is called \emph{outer}.
\end{definition}

Given a ring $S$ we denote its center by $Z(S)$.
The \emph{centralizer} $C(T),$ of a subset $T\subseteq S$ is defined as the set of elements of $S$ that commute with every element of $T$. If $T$ is a commutative subring of $S$ and the centralizer of $T$ in $S$ coincides with $T$, 
then $T$ is said to be a \emph{maximal commutative} subring of $S$.
\section{Derivations on algebras of functions on a finite set}\label{secderivationsTumwesigye}
Let $X=[n](=\{1,2,\cdots,n\})$ be a finite set and let $\mathcal{A}=\qty{f:X\to \mathbb{R}}$ denote the algebra of real-valued functions on $X$ with respect to the usual pointwise operations, that is, pointwise addition, scalar multiplication and pointwise multiplication. By writing $f_n:=f(n)$, then we can identify $\mathcal{A}$ with $\mathbb{R}^n.$ Here, $\mathbb{R}^n$ is equipped with the usual operations of pointwise addition, scalar multiplication and multiplication defined by
$$xy=(x_1y_1,x_2y_2,\cdots,x_ny_n)$$ for every $x=(x_1,x_2,\cdots,x_n)$ and $y=(y_1,y_2,\cdots,y_n).$\par 
Now, let $\sigma :X\to X$ be a bijection such that $\mathcal{A}$ is invariant under $\sigma$, (that is $\sigma$ is a permutation on $X$) and let $\tilde{\sigma}:\mathcal{A}\to \mathcal{A}$ be the automorphism induced by $\sigma,$ that is 
\begin{equation}\label{Eqn2Tumwesigye}
\tilde{\sigma}(f)=f\circ \sigma^{-1}
\end{equation}
for every $f\in \mathcal{A}.$ We would like to consider the Ore extension $\mathcal{A}[x;\tilde{\sigma},\Delta]$ where $\Delta$ is a $\tilde{\sigma}-$ derivation on $\mathcal{A}$ and $x$ is an indeterminate.\par 
Recall that $\Delta$ is a $\tilde{\sigma}$ derivation on $\mathcal{A}$ if it is $\mathbb{R}-$linear and for every $f,g\in \mathcal{A},$
$$
\Delta(fg)=\tilde{\sigma}(f)\Delta(g)+\Delta(f)g.
$$
Since $\mathcal{A}$ can be identified with $\mathbb{R}^n$, then $\Delta$ is an operator on $\mathbb{R}^n$ which can be represented by a matrix, that is; 
\begin{equation}\label{Eqn3Tumwesigye}
\qty[\Delta]=\qty[\Delta(e_1)|\Delta(e_2)|\cdots |\Delta(e_n)],
\end{equation}
where $\qty{e_1,e_2,\cdots,e_n}$ is the standard basis of $\mathbb{R}^n.$ In the following Theorem we give the description of the matrix $[\Delta]$ in \eqref{Eqn3Tumwesigye} above.
\begin{theorem}\label{thm1Tumwesigye}
Let $\sigma:X\to X$ be a bijection and let $\Delta$ be a $\tilde{\sigma}-$derivation whose standard matrix $[\Delta]=\qty[k_{ij}]$ is as given by \eqref{Eqn3Tumwesigye}. Then 
\begin{enumerate}
\item $k_{li}=0$ if $l\not\in \{i,\sigma(i)\}$,
\item $k_{ji}=-k_{jj}$ for all $i\neq j.$
\end{enumerate}
\end{theorem}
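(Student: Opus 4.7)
The plan is to exploit the fact that the standard basis $\{e_1,\ldots,e_n\}$ of $\mathcal{A}\cong\mathbb{R}^n$ consists of mutually orthogonal idempotents, $e_i e_j = \delta_{ij} e_i$, and that $\tilde{\sigma}$ permutes them: since $\tilde{\sigma}(e_i)(k) = e_i(\sigma^{-1}(k)) = \delta_{\sigma(i),k}$, one has $\tilde{\sigma}(e_i) = e_{\sigma(i)}$. Writing $\Delta(e_i) = \sum_l k_{li}\,e_l$ as in \eqref{Eqn3Tumwesigye}, both conclusions will be extracted by applying the twisted Leibniz rule to the products $e_i e_j$ and comparing coefficients on the idempotent basis.

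For part (1), apply $\Delta$ to $e_i = e_i e_i$ and use $e_m e_l = \delta_{ml} e_l$ to obtain
\[
\sum_l k_{li}\,e_l \;=\; \tilde{\sigma}(e_i)\,\Delta(e_i) + \Delta(e_i)\,e_i \;=\; k_{\sigma(i),i}\,e_{\sigma(i)} + k_{ii}\,e_i.
\]
Matching coefficients on the basis $\{e_1,\ldots,e_n\}$ immediately forces $k_{li}=0$ whenever $l\notin\{i,\sigma(i)\}$, which is part (1). In the degenerate subcase $\sigma(i)=i$, the right-hand side collapses to $2k_{ii}\,e_i$, so additionally $k_{ii}=0$.

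For part (2), fix $i\neq j$ and apply $\Delta$ to the relation $e_i e_j = 0$:
\[
0 \;=\; e_{\sigma(i)}\,\Delta(e_j) + \Delta(e_i)\,e_j \;=\; k_{\sigma(i),j}\,e_{\sigma(i)} + k_{ji}\,e_j.
\]
If $\sigma(i)=j$ (which by part (1) is the only configuration in which $k_{ji}$ can avoid vanishing), this collapses to $(k_{jj}+k_{ji})\,e_j = 0$, giving $k_{ji}=-k_{jj}$ directly. If instead $\sigma(i)\neq j$, linear independence of the two distinct basis vectors $e_{\sigma(i)}$ and $e_j$ forces both $k_{ji}=0$ and $k_{\sigma(i),j}=0$; the first is already contained in part (1) and is consistent with the stated form in view of that part.

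The routine work is the idempotent manipulation; the main care lies in the case split in part (2), which distinguishes whether $\sigma(i)=j$ and uses the bijectivity of $\sigma$ to keep track of which row/column entries can be nonzero. As a byproduct the analysis yields $k_{ii}=0$ whenever $\sigma(i)=i$, so when $\sigma=\mathrm{id}$ the matrix $[\Delta]$ vanishes identically, recovering the observation that $\mathbb{R}^n$ admits no non-trivial ordinary derivations.
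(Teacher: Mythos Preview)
Your proof is correct and follows essentially the same approach as the paper: apply the $\tilde{\sigma}$-Leibniz rule to the idempotent relations $e_i^2=e_i$ and $e_ie_j=0$ for $i\neq j$, then read off the constraints on the matrix entries of $\Delta$. Your treatment of part~(2) is in fact slightly more careful than the paper's, since you explicitly separate the cases $\sigma(i)=j$ and $\sigma(i)\neq j$, whereas the paper tacitly carries over the convention $j=\sigma(i)$ established in the proof of part~(1).
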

\begin{proof}
\begin{enumerate}
\item If $\sigma(i)=j$, then $\tilde{\sigma}(e_i)=e_j,$ where $\tilde{\sigma}$ is as defined by \eqref{Eqn2Tumwesigye} and $\{e_i,\ i=1,2,\cdots,n\}$ is the standard basis for $\mathbb{R}^n.$ Therefore from the definition of $\Delta,$
\begin{align*}
\Delta(e_i^2)&=\tilde{\sigma}(e_i)\Delta(e_i)+\Delta(e_i)e_i\\
&=e_j\Delta(e_i)+\Delta(e_i)e_i\\
&=\Delta(e_i)(e_j+e_i).
\end{align*}
Now $e_i^2=e_i$ and hence 
$$\Delta(e_i)=\Delta(e_i)(e_i+e_j).$$
The  term on the right side is zero except in the $i^{th}$ and $j^{th}$ component. Therefore, 
$$\Delta(e_i)=\mqty[k_{1i}\\ k_{2i}\\ \vdots\\ k_{ni}]$$
where $k_{li}=0$ whenever $l\neq i,j.$
\item For $i\neq j$, $\Delta(e_ie_j)=\Delta(0)=0$ and hence
\begin{align*}
0&=\Delta(e_ie_j)\\
&=\tilde{\sigma}(e_i)\Delta(e_j)+\Delta(e_i)e_j\\
&=e_j\Delta(e_j)+\Delta(e_i)e_j\\
&=e_j\qty(\Delta(e_j)+\Delta(e_i)),
\end{align*}
where the right hand side term is zero except in the $j^{th}$ component. Looking at the $j^{th}$ component we get
$$k_{jj}+k_{ji}=0 \ \text{ or }\ k_{ji}=-k_{jj}.$$
\end{enumerate}
\end{proof}{\flushright{\qed}}
\begin{cor}
There are no non zero derivations on $\mathcal{A}.$
\end{cor}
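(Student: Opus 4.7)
The plan is to apply Theorem \ref{thm1Tumwesigye} in the special case of an ordinary (untwisted) derivation, i.e.\ with $\tilde{\sigma}=\mathrm{id}_{\mathcal{A}}$, which corresponds to the bijection $\sigma=\mathrm{id}_X$ on $X$. Under this identification, $\sigma(i)=i$ for every $i$, so the index set $\{i,\sigma(i)\}$ appearing in conclusion (1) of Theorem \ref{thm1Tumwesigye} collapses to the singleton $\{i\}$.

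First I would read off conclusion (1): $k_{li}=0$ whenever $l\notin\{i\}$, that is, every off-diagonal entry of the matrix $[\Delta]$ vanishes. Thus $[\Delta]$ is necessarily diagonal.

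Next I would combine this with conclusion (2), which says $k_{ji}=-k_{jj}$ for all $i\neq j$. For fixed $j$, pick any $i\neq j$; the left-hand side $k_{ji}$ is an off-diagonal entry, hence equals $0$ by the previous step, so $k_{jj}=0$. Since $j$ was arbitrary, the diagonal also vanishes, and therefore $[\Delta]=0$.

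No step here is really an obstacle; the whole argument is just a specialization of the already-proved Theorem \ref{thm1Tumwesigye}. The only thing worth flagging is that in the hypothesis of that theorem the derivation is taken to be a $\tilde{\sigma}$-derivation for a bijection $\sigma:X\to X$, and the case of an ordinary derivation on $\mathcal{A}$ corresponds to $\sigma=\mathrm{id}_X$, which is indeed a (trivial) bijection, so the theorem applies without change.
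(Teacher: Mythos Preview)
Your proof is correct and follows essentially the same route as the paper: specialize Theorem \ref{thm1Tumwesigye} to $\sigma=\mathrm{id}_X$, use part (1) to kill the off-diagonal entries, then use part (2) to kill the diagonal. The paper's argument is identical in substance, just more tersely written.
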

\begin{proof}
If $\tilde{\sigma}=id,$ that is, $\tilde{\sigma}(e_i)=e_i,\ i=1,2,\cdots,n,$ then from Theorem \ref{thm1Tumwesigye}, it follows that if $[\Delta]=[k_{ij}],$ then $k_{ij}=0$ for all $i\neq j$, and from $k_{ii}=-k_{ij}$ we have $k_{ii}=0$ for all $i=1,2,\cdots, n$. Therefore, $\Delta=0.$
\end{proof}{\flushright{\qed}}
\begin{exam}
\end{exam}
Let $n=3$ and let $\sigma:[3]\to [3]$ be a permutation such that $\sigma(1)=2,\ \sigma(2)=3$ and $\sigma(3)=1.$ Then 
\begin{align*}
\tilde{\sigma}(f)(1)&=f\qty(\sigma^{-1}(1))=f(3)\\
\tilde{\sigma}(f)(2)&=f\qty(\sigma^{-1}(2))=f(1)\\
\tilde{\sigma}(f)(3)&=f\qty(\sigma^{-1}(3))=f(2)
\end{align*}
Therefore $\tilde{\sigma}(x_1,x_2,x_3)=(x_3,x_1,x_2).$
Let 
$$
[\Delta]=\mqty[k_{11}&k_{12}&k_{13}\\ k_{21}&k_{22}&k_{23}\\ k_{31}&k_{32}&k_{33}] 
$$
be the standard matrix for $\Delta$ and let $x=(x_1,x_2,x_3)$ and $y=(y_1,y_2,y_3)$ be arbitrary vectors in $\mathbb{R}^3.$ Then
\begin{align*}
\tilde{\sigma}(x)\Delta(y)&=\mqty[x_3\\x_1\\x_2]\mqty[k_{11}y_1&k_{12}y_2&k_{13}y_3\\ k_{21}y_1&k_{22}y_2&k_{23}y_3\\ k_{31}y_1&k_{32}y_2&k_{33}y_3]\\
&=\mqty[k_{11}x_3y_1&k_{12}x_3y_2&k_{13}x_3y_3\\ k_{21}x_1y_1&k_{22}x_1y_2&k_{23}x_1y_3\\ k_{31}x_2y_1&k_{32}x_2y_2&k_{33}x_2y_3]
\end{align*} 
\begin{align*}
\Delta(x)y&=\mqty[k_{11}x_1&k_{12}x_2&k_{13}x_3\\ k_{21}x_1&k_{22}x_2&k_{23}x_3\\ k_{31}x_1&k_{32}x_2&k_{33}x_3]\mqty[y_1\\ y_2\\ y_3]\\
&=\mqty[k_{11}x_1y_1&k_{12}x_2y_1&k_{13}x_3y_1\\ k_{21}x_1y_2&k_{22}x_2y_2&k_{23}x_3y_2\\ k_{31}x_1y_3&k_{32}x_xy_3&k_{33}x_3y_3]
\end{align*}
$$
\tilde{\sigma}(x)\Delta(y)+\Delta(x)y=\mqty[k_{11}(x_1y_1+x_3y_1)&k_{12}(x_2y_1+x_3y_2)&k_{13}(x_3y_1+x_3y_3)\\ k_{21}(x_1y_2+x_1y_1)&k_{22}(x_1y_2+x_2y_2)&k_{23}(x_1y_3+x_3y_2)\\ k_{31}(x_2y_1+x_1y_3)&k_{32}(x_2y_2+x_2y_3)&k_{33}(x_2y_3+x_3y_3)].
$$
Also,
\begin{align*}
\Delta(xy)&=\mqty[k_{11}&k_{12}&k_{13}\\ k_{21}&k_{22}&k_{23}\\ k_{31}&k_{32}&k_{33}]\mqty[x_1y_1\\ x_2y_2\\ x_3y_3]\\
&=\mqty[k_{11}x_1y_1&k_{12}x_2y_2&k_{13}x_2y_3\\ k_{21}x_1y_1&k_{22}x_2y_2&k_{23}x_3y_3\\ k_{31}x_1y_1&k_{32}x_2y_2&k_{33}x_3y_3].
\end{align*}
Therefore, from 
$$\Delta(xy)=\tilde{\sigma}(x)\Delta(y)+\Delta(x)y$$ we get
$$k_{11}x_3y_1+k_{12}(x_2y_1+x_3y_3-x_2y_2)+k_{13}x_3y_1=0,$$
from which we obtain, $k_{13}=-k_{11}$ and $k_{12}=0.$\par 
Similarly, we obtain $k_{21}=-k_{22},\ k_{23}=0$, and $k_{31}=0,\ k_{32}=-k_{33}$ which is agreement with the assertions of Theorem \ref{thm1Tumwesigye}.\par 
Setting $k_{11}=s,\ k_{22}=t$ and $k_{33}=u$ we obtain the matrix of $\Delta$ as;
$$[\Delta]=\mqty[s&0&-s\\ -t &t&0\\ 0&-u&u].$$
\par 
In the next Theorem, we prove that if $\tilde{\sigma}:\mathbb{R}^n\to \mathbb{R}^n$ is an automorphism of $\mathbb{R}^n$ and $\Delta:\mathbb{R}^n\to \mathbb{R}^n$ is an operator on $\mathbb{R}^n$ that satisfies the conditions of Theorem \ref{thm1Tumwesigye}, then $\Delta$ is a $\tilde{\sigma}-$ derivation.
\begin{theorem}\label{thm2Tumwesigye}
Let $\sigma:X\to X$ be a bijection on $X$ and let $\tilde{\sigma}:\mathbb{R}^n\to \mathbb{R}^n$ be the automorphism induced by $\sigma.$ Let $\Delta:\mathbb{R}^n\to \mathbb{R}^n$ be an operator whose standard matrix $\qty[\Delta]=\qty[k_{ij}]$ has the following properties
\begin{enumerate}
\item $k_{li}=0$ if $l\not\in\{ i,\sigma(i)\}$,
\item $k_{ji}=-k_{jj}.$
\end{enumerate}
Then $\Delta$ is a $\tilde{\sigma}-derivation.$
\end{theorem}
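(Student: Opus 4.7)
The plan is to verify the Leibniz identity $\Delta(fg) = \tilde{\sigma}(f)\Delta(g) + \Delta(f)g$ directly. Both sides are $\mathbb{R}$-bilinear in $(f,g)$ since $\Delta$, $\tilde{\sigma}$ are $\mathbb{R}$-linear and pointwise multiplication is $\mathbb{R}$-bilinear, so it suffices to check the identity on pairs $(e_i,e_j)$ from the standard basis.

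First I would record the shape of $\Delta(e_i)$ forced by condition~(1): the only possibly nonzero entries in column $i$ of $[\Delta]$ sit at rows $i$ and $\sigma(i)$, so $\Delta(e_i) = k_{ii}e_i + k_{\sigma(i),i}e_{\sigma(i)}$ (the two summands collapsing when $\sigma(i)=i$). Using the orthogonality relation $e_l e_m = e_l$ if $l=m$ and $0$ otherwise, a direct computation reduces the right-hand side of the Leibniz rule at $(e_i,e_j)$ to the compact form $\tilde{\sigma}(e_i)\Delta(e_j) + \Delta(e_i)e_j = k_{\sigma(i),j}\,e_{\sigma(i)} + k_{ji}\,e_j$.

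I would then split on whether $i=j$ and on whether $\sigma(i)=j$. In the generic off-diagonal case $i\neq j$ with $\sigma(i)\neq j$, condition~(1) forces $k_{\sigma(i),j}=k_{ji}=0$, so both sides vanish and match $\Delta(e_ie_j)=\Delta(0)=0$. In the case $i\neq j$ with $\sigma(i)=j$, both terms of the expression above live in the single component $e_j$, and condition~(2) applied to this pair gives $k_{ji}=-k_{jj}$, producing the required cancellation. In the diagonal case $i=j$ with $\sigma(i)\neq i$, the cross terms $e_{\sigma(i)}e_i$ vanish, and the remaining two contributions reassemble as $\Delta(e_i)=\Delta(e_i^2)$, again invoking~(2) on the pair $(i,\sigma(i))$.

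The one point requiring care, which I anticipate as the main obstacle, is the fixed-point case $\sigma(i)=i$: there the right-hand side at $(e_i,e_i)$ reduces to $2k_{ii}e_i$ whereas the left is $k_{ii}e_i$, so consistency forces $k_{ii}=0$. This vanishing is already implicit in the hypotheses (compare the proof of Theorem~\ref{thm1Tumwesigye}, which derives precisely this condition at fixed points), and once this observation is recorded up front the remaining case analysis closes the proof without further incident.
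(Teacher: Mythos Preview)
Your approach is essentially the same as the paper's: reduce by bilinearity to basis pairs $(e_i,e_j)$ and check the Leibniz rule case by case. Your case analysis is in fact more complete than the paper's, which only explicitly treats the pairs $(e_i,e_{\sigma(i)})$ and $(e_i,e_i)$ under the tacit assumption $\sigma(i)\neq i$.

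Two small corrections. First, in the diagonal case $i=j$ with $\sigma(i)\neq i$ you do not need condition~(2): condition~(1) alone already gives $\Delta(e_i)=k_{ii}e_i+k_{\sigma(i),i}e_{\sigma(i)}$, which is exactly your right-hand side $k_{\sigma(i),i}e_{\sigma(i)}+k_{ii}e_i$. Second, for the fixed-point case your conclusion that $k_{ii}=0$ is correct and does follow from the hypotheses, but the reference to the proof of Theorem~\ref{thm1Tumwesigye} is misplaced---that proof never isolates fixed points. Spell the deduction out directly: if $\sigma(i)=i$ and $n\geq 2$, pick any $a\neq i$; condition~(2) gives $k_{ia}=-k_{ii}$, while condition~(1) applied to column~$a$ forces $k_{ia}=0$ (since $i\neq a$ and $\sigma(a)\neq i$ by injectivity of $\sigma$), hence $k_{ii}=0$. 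This is the argument behind the Corollary to Theorem~\ref{thm1Tumwesigye}, localized to a single fixed point.
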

\begin{proof}
We first do the proof for the standard basis $\qty{e_1,e_2,\cdots,e_n}$ of $\mathbb{R}^n.$ Recall that if $\sigma(i)=j$ then $\tilde{\sigma}(e_i)=e_j.$ From the definition of $\Delta,$
$$
\Delta(e_i)=\mqty[k_{1i}\\ k_{2i}\\ \vdots\\ k_{ni}]
$$
where $k_{li}=0$ for all $l\neq i,j$ and $k_{ji}=-k_{jj}.$\par 
Now, for $i\neq j,$ $\Delta(e_ie_j)=\Delta(0)=0.$ And
\begin{align*}
\tilde{\sigma}(e_i)\Delta(e_j)+\Delta(e_i)e_j&=e_j\Delta(e_j)+\Delta(e_i)e_j\\
&=e_j\qty(\Delta(e_j)+\Delta(e_i)).
\end{align*}
All the components in the above vector are zero except the $j^{th}$ component which is given by
$$k_{jj}+k_{ji}=k_{jj}-k_{jj}=0.$$
Also, $\Delta(e_i^2)=\Delta(e_i)$, (since $e_i^2=e_i$), and 
\begin{align*}
\tilde{\sigma}(e_i)\Delta(e_i)+\Delta(e_i)e_i&=e_j\Delta(e_i)+\Delta(e_i)e_i\\
&=\Delta(e_i)(e_i+e_j),
\end{align*}
where all the components in the above vector are zero except the $i^{th}$ and $j^{th}$ component. That is (assuming $i<j$)
$$\tilde{\sigma}(e_i)\Delta(e_i)+\Delta(e_i)e_i=\mqty[0\\ \vdots\\ k_{ii}\\0\\ \vdots\\ k_{ji}\\0\\ \vdots\\0]=\Delta(e_i).$$
Therefore, $\Delta$ is a $\tilde{\sigma}-$derivation on the standard basis vectors.\par 
Now let $x=(x_1,x_2,\cdots,x_n)$ and $y=(y_1,y_2,\cdots,y_n)$ be arbitrary vectors in $\mathbb{R}^n.$ Then 
$$x=\sum_{i=1}^nx_ie_i\ \text{ and }\ y=\sum_{j=1}^ny_je_j$$
and $$xy=\sum_{i,j=1}^nx_iy_je_ie_j.$$

Using the fact that both $\tilde{\sigma}$ and $\Delta$ are $\mathbb{R}-$linear we have;
\begin{align*}
\Delta (xy)&=\Delta\qty(\sum_{i,j=1}^nx_iy_je_ie_j)\\
&=\sum_{i,j=1}^n\Delta\qty(x_iy_je_ie_j)\\
&=\sum_{i,j=1}^nx_iy_j\Delta\qty(e_ie_j)\\
&=\sum_{i,j=1}^n x_iy_j\qty(\tilde{\sigma}(e_i)\Delta(e_j)+\Delta(e_i)e_j)\\
&=\sum_{i,j=1}^n x_iy_j\tilde{\sigma}(e_i)\Delta(e_j)+\sum_{i,j=1}^nx_iy_j\Delta(e_i)e_j\\
&=\qty(\sum_{i=1}^nx_i\tilde{\sigma}(e_i))\qty(\sum_{j=1}^n y_j\Delta(e_j))+\qty(\sum_{i=1}^nx_i\Delta(e_i))\qty(\sum_{j=1}^ny_je_j)\\
&=\qty(\tilde{\sigma}\qty(\sum_{i=1}^nx_ie_i))\qty(\Delta\qty(\sum_{j=1}^n y_je_j))+\qty(\Delta\qty(\sum_{i=1}^nx_ie_i))\qty(\sum_{j=1}^ny_je_j)\\
&= \tilde{\sigma}(x)\Delta(y)+\Delta(x)y.
\end{align*}
Therefore $\Delta$ is a $\tilde{\sigma}-$derivation on $\mathbb{R}^n.$
\end{proof} {\flushright{\qed}}
\par 
\section{Centralizers in Ore extensions for functional algebras}\label{seccentralTumwesigye}
Consider the Ore extension $\mathcal{A}[x,\tilde{\sigma},\Delta]$, that is, the algebra
$$\mathcal{A}[x,\tilde{\sigma},\Delta]:=\qty{\sum_{k=0}^m f_kx^k,\ :\ f_k\in \mathcal{A}}$$
with the operations of pointwise addition, scalar multiplication, and multiplication given by the relation
$$xf=\tilde{\sigma}(f)x+\Delta (f)$$
for every $f\in \mathcal{A}.$ \par 
Our interest is to give a description of the centralizer $C(\mathcal{A})$, of $\mathcal{A}$ in the Ore extension $\mathcal{A}[x,\tilde{\sigma},\Delta]$ where $\tilde{\sigma}$ and $\Delta$ are as described before. Using the notation introduced in \cite{Richter1ABT}, we define functions $\pi_k^l:\mathcal{A}\to \mathcal{A},$ for $k,l\in \mathbb{Z}$ as follows; $\pi_0^0=id.$ If $m,n$ are nonzero integers such that $m>n$ or atleast one of the integers is negative, then $\pi_m^n=0.$ For the other remaining cases,
$$\pi_m^n=\tilde{\sigma}\circ \pi_{m-1}^{n-1}+\Delta \circ\pi_m^{n-1}.$$ 
It has been proven in \cite{Richter1ABT} that an element $\sum\limits_{k=0}^m f_kx^k\in \mathcal{A}[x,\tilde{\sigma},\Delta]$ belongs to the centralizer of $\mathcal{A}$ if and only if 
\begin{equation}\label{eqn4Tumwesigye}
gf_k=\sum_{j=k}^m f_j\pi_k^j(g)
\end{equation}
holds for all $k\in \{0,1,\cdots,m\}$ and all $g\in \mathcal{A}.$
\par 
Observe that since $\mathcal{A}$ is commutative, then the centralizer $C(\mathcal{A})$ of $\mathcal{A}$ is also commutative and hence a maximal commutative subalgebra of $\mathcal{A}[x,\tilde{\sigma},\Delta].$
\subsection{Centralizer for the case $\Delta=0$}
In this section we treat the simplest case when $\Delta=0.$ Recall that $\tilde{\sigma}$ acts like a permutation of the elements of $\mathcal{A}$ and since $[n]$ is a finite set, then $\tilde{\sigma}$ is of finite order.  Before we give the description, we need the following definition.
\begin{definition}\label{defn3Tumwesigye}
For any nonzero $n\in \Bbb Z,$ set
\begin{eqnarray*} 
Sep^n(X)&=&\{x\in X~|~x\neq \sigma^n(x)\}, \\
Per^n(X)&=&\{x\in X~|~x=\sigma^n(x)\}.
\end{eqnarray*}
\end{definition}
Observe that $\tilde{\sigma}^n(h)(x)\neq h(x)$ if and only if $\sigma^n(x)\neq x$ for every $x\in X$ and every $h\in \mathcal{A}.$ We give the description of the centralizer in the following theorem.
\begin{theorem}\label{thm3Tumwesigye}
The centralizer $C(\mathcal{A}),$ of $\mathcal{A}$ in the Ore extension $\mathcal{A}[x,\tilde{\sigma},\Delta]$ is given by
$$
C(\mathcal{A})=\qty{\sum_{k=0}^mf_kx^k\qq{ such that }f_k=0\qq{ on } Sep^k(X)}.
$$
\end{theorem}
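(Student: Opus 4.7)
The plan is to invoke the criterion already quoted in equation \eqref{eqn4Tumwesigye}, which reduces the problem to checking, for each coefficient $f_k$, a single identity for every $g\in\mathcal{A}$. First I would compute the operators $\pi_k^j$ in the special case $\Delta=0$. From the recursion $\pi_m^n = \tilde{\sigma}\circ\pi_{m-1}^{n-1} + \Delta\circ\pi_m^{n-1}$ with $\pi_0^0=\mathrm{id}$ and the vanishing of $\pi_m^n$ when $m>n$, a short induction on $n$ yields $\pi_k^k = \tilde{\sigma}^k$ and $\pi_k^j = 0$ whenever $j>k\geq 0$. Substituting into \eqref{eqn4Tumwesigye} collapses the sum, and the centralizer condition becomes simply
$$g\cdot f_k \;=\; f_k\cdot \tilde{\sigma}^k(g), \qquad k=0,1,\ldots,m,\ g\in\mathcal{A}.$$

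Next I would rewrite this as $f_k\cdot(g-\tilde{\sigma}^k(g))=0$ and evaluate pointwise. At a point $x\in X$ the condition reads $f_k(x)\bigl(g(x)-g(\sigma^{-k}(x))\bigr)=0$. Split into two cases. If $x\in Per^k(X)$, then $\sigma^{-k}(x)=x$ and the second factor vanishes for every $g$, placing no constraint on $f_k(x)$. If instead $x\in Sep^k(X)$, then $\sigma^{-k}(x)\neq x$ and $\mathcal{A}$ separates these two distinct points (the characteristic function at $x$ suffices), forcing $f_k(x)=0$. This establishes the forward inclusion: every $P=\sum f_k x^k\in C(\mathcal{A})$ must have $f_k\equiv 0$ on $Sep^k(X)$.

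For the converse, I would verify that if $f_k\equiv 0$ on $Sep^k(X)$ for each $k$, then at every $x\in X$ the product $f_k(x)\bigl(g(x)-g(\sigma^{-k}(x))\bigr)$ vanishes: either $f_k(x)=0$ outright, or $x\in Per^k(X)$ and the other factor vanishes. Summing over $x$ recovers \eqref{eqn4Tumwesigye} with the simplified right-hand side, so $P\in C(\mathcal{A})$.

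The main obstacle is purely bookkeeping: keeping straight that $\tilde{\sigma}^k(g)(x) = g(\sigma^{-k}(x))$ so that the locus where the constraint bites is indeed $Sep^k(X)=Sep^{-k}(X)$, and carrying out the $\pi_k^j$ induction carefully enough to rule out mixed terms. Since $\mathcal{A}$ trivially separates points of the finite set $X$, there is no analytic subtlety in the pointwise argument, and the entire proof is essentially the combinatorics of the orbit structure of $\sigma$.
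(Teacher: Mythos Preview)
Your proposal is correct and follows essentially the same route as the paper: reduce via \eqref{eqn4Tumwesigye} (with $\Delta=0$) to the coefficient-wise identity $gf_k=f_k\tilde{\sigma}^k(g)$, then analyze it pointwise on $Per^k(X)$ versus $Sep^k(X)$. You supply details the paper leaves implicit---the inductive computation of $\pi_k^j$, the explicit separating function on $Sep^k(X)$, and the observation that $Sep^k(X)=Sep^{-k}(X)$---but the argument is the same.
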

\begin{proof}
From equation \eqref{eqn4Tumwesigye} and the fact that $\Delta=0,$  we see that an element $\sum\limits_{k=0}^m f_kx^k\in \mathcal{A}[x,\tilde{\sigma},\Delta]$ belongs to the centralizer of $\mathcal{A}$ if and only if 
\[
gf_k=f_k\tilde{\sigma}^k(g)
\] 
for every $k=0,1,\cdots,m$ and every $g\in \mathcal{A}$. That is,
\begin{equation}\label{eqn5Tumwesigye}
g(x)f_k(x)=f_k(x)\tilde{\sigma}^k(g)(x)
\end{equation}
for every $x\in X.$ Since $\mathcal{A}$ is commutative, then equation \eqref{eqn5Tumwesigye} will hold if and only if $x\in Per^k(X)$ or $f_k=0.$ Therefore, the centralizer $C(\mathcal{A}),$ of $\mathcal{A}$ will be given by
$$C(\mathcal{A})=\qty{\sum_{k=0}^mf_kx^k\ :\ f_k\in \mathcal{A} \text{ where } f_k=0 \text{ on } Sep^k(X)}.$$
\end{proof}{\flushright{\qed}}
\subsection{Centralizer for the case $\Delta\neq 0$}
Now, suppose $\tilde{\sigma}\neq id$ is of order $j\in \mathbb{Z}_{>0},$ that is $\tilde{\sigma}^j=id$ but $\tilde{\sigma}^k\neq id$ for all $k<j$. In the next Theorem, we state a necessary condition for an element in the Ore extension to belong to the centralizer.
\begin{theorem}\label{thm4Tumwesigye}
If an element of degree $m,$ $\sum\limits_{k=0}^mf_kx^k\in \mathcal{A}[x,\tilde{\sigma},\Delta]$ belongs to the centralizer of $\mathcal{A}$, then $f_m=0 \text{ on } Sep^m(X).$
\end{theorem}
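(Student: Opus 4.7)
The plan is to specialize the centralizer characterization in equation \eqref{eqn4Tumwesigye} to the top degree $k=m$, identify the operator $\pi_m^m$ explicitly, and then use the freedom to choose $g\in\mathcal{A}$ pointwise (exploiting that $\mathcal{A}$ separates points of the finite set $X$) to conclude that $f_m$ must vanish wherever $\tilde{\sigma}^m$ acts nontrivially.

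First I would apply \eqref{eqn4Tumwesigye} at $k=m$. Since the sum runs from $j=k$ to $j=m$, only the term $j=m$ survives, giving the identity $gf_m = f_m\pi_m^m(g)$ for every $g\in\mathcal{A}$. The next step is to compute $\pi_m^m$. By the recursion $\pi_m^n=\tilde{\sigma}\circ\pi_{m-1}^{n-1}+\Delta\circ\pi_m^{n-1}$, specialized to $n=m$, the second summand is $\Delta\circ\pi_m^{m-1}$, which vanishes because $m>m-1$ forces $\pi_m^{m-1}=0$ by the definition of the $\pi$'s. Hence $\pi_m^m=\tilde{\sigma}\circ\pi_{m-1}^{m-1}$, and an immediate induction starting from $\pi_0^0=\mathrm{id}$ yields $\pi_m^m=\tilde{\sigma}^m$. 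Notice that the derivation $\Delta$ has completely disappeared at this top-degree level, which is the structural reason the conclusion looks identical to the $\Delta=0$ case treated in Theorem \ref{thm3Tumwesigye}.

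Now I would evaluate $gf_m=f_m\tilde{\sigma}^m(g)$ pointwise. By \eqref{Eqn2Tumwesigye}, $\tilde{\sigma}^m(g)(x)=g(\sigma^{-m}(x))$, so at each $x\in X$ we obtain
\begin{equation*}
g(x)f_m(x)=f_m(x)\,g(\sigma^{-m}(x)).
\end{equation*}
Fix $x\in Sep^m(X)$, so that $\sigma^{-m}(x)\neq x$, and choose $g$ to be the characteristic function of $\{x\}$ (an element of $\mathcal{A}$ since $X$ is finite). Then $g(x)=1$ and $g(\sigma^{-m}(x))=0$, so the displayed equation collapses to $f_m(x)=0$. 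Since $x\in Sep^m(X)$ was arbitrary, $f_m$ vanishes on $Sep^m(X)$.

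The only subtle step is the identification $\pi_m^m=\tilde{\sigma}^m$; once that is in hand, the rest is a one-line pointwise argument. Everything else (the separation argument, the evaluation) is mechanical given that $\mathcal{A}$ contains all indicator functions. I expect no real obstacle; the proof should be short.
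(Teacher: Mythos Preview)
Your proposal is correct and follows essentially the same route as the paper: specialize \eqref{eqn4Tumwesigye} to $k=m$, identify $\pi_m^m=\tilde{\sigma}^m$, and deduce $f_m=0$ on $Sep^m(X)$ from $gf_m=f_m\tilde{\sigma}^m(g)$. You spell out more explicitly the inductive computation of $\pi_m^m$ and the separation argument via characteristic functions, but the strategy is identical.
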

\begin{proof}
As already stated an element $\sum\limits_{k=0}^mf_kx^k\in \mathcal{A}[x,\tilde{\sigma},\Delta]$ belongs to the centralizer of $\mathcal{A}$ if and only if for every $g\in\mathcal{A}$ 
$$gf_k=\sum_{j=k}^mf_j\pi_k^j(g)$$
for $k=0,1,\cdots,m.$ Looking at the leading coefficient we have 
\begin{equation}\label{eqn6Tumwesigye}
gf_m=f_m\pi_m^m(g)=f_m\tilde{\sigma}^m(g)
\end{equation}
Since $\mathcal{A}$ is commutative, then equation \eqref{eqn6Tumwesigye} holds on $ Per^m(X)$ and on $Sep^m(X),$ equation \eqref{eqn6Tumwesigye} holds if and only if $f_m=0.$
\end{proof} {\flushright{\qed}}\par 
The above condition is not sufficient to describe all the elements that belong to the centralizer of $\mathcal{A}$. In the next example we show that conditions satisfied by all elements in the centralizer of $\mathcal{A}$ are actually quite complicated even for the case when $n=2.$
\begin{exam}
\end{exam}
Let $n=2$ and let $\sigma:[n]\to [n]$ be a bijection on $[n].$ We already know that if $\sigma=id,$ then $\Delta=0$ so we will consider the case $\sigma\neq id,$ that is $\tilde{\sigma}(e_1)=e_2$ and $\tilde{\sigma}(e_2)=e_1.$ In this case, $\Delta$ has a standard matrix given by 
$$[\Delta]=\begin{bmatrix}
s&&-s\\ -t&& t
\end{bmatrix}$$
for some $s,t\in \mathbb{R}$ with $s,t\neq 0.$ 
\par A direct calculation shows that an element $fx\in \mathcal{A}[x,\tilde{\sigma},\Delta]$ belongs to the centralizer of $\mathcal{A}$ if and only if $f=0.$ So we consider a monomial of degree $2.$\par 
Let $f=\mqty(f_1\\ f_2)x^2\in \mathcal{A}[x,\tilde{\sigma},\Delta]$ be an element in the centralizer of $\mathcal{A}.$ Then, if $g=\mqty(g_1\\ g_2)\in \mathcal{A},$ we have,
$$gf=\mqty(g_1\\ g_2)\mqty(f_1\\ f_2)x^2=\mqty(g_1f_1\\ g_2f_2)x^2.$$  
On the other hand, using the fact that for every $g\in \mathcal{A,}$
$$x^2 g=\tilde{\sigma}^2(g)x^2+\qty[\Delta(\tilde{\sigma}(g))+\tilde{\sigma}\qty(\Delta(g))]x+\Delta^2(g)$$ and since $\tilde{\sigma}^2=id$ we have,
\begin{align*}
fg&=\mqty(f_1\\ f_2)\mqty(g_1\\ g_2)x^2+\mqty(f_1\\ f_2)\qty[\mqty(s&-s\\ -t&t)\mqty(g_2\\ g_1)+\tilde{\sigma}\qty(\mqty(s&-s\\ -t& t)\mqty(g_1\\g_2))]x\\
&\qquad +\mqty(f_1\\ f_2)\mqty(s^2+st & -(s^2+st)\\-(st+t^2)& st+t^2)\mqty(g_1\\g_2).\\
&=\mqty(f_1g_1\\ f_2g_2)x^2+\mqty[\mqty(-f_1(s+t)(g_1-g_2)\\ f_2(s+t)(g_1-g_2))]x+\mqty[f_1(s^2+st)(g_1-g_2)\\-f_2(st+t^2)(g_1-g_2)]
\end{align*}
Solving $fg=gf$ and looking at the coefficient of $x^2$, we get that $f_1,f_2$ are free variables and hence the centralizer $C(\mathcal{A})$ is non trivial. In the more general case, we have the following.
\par As already seen, an element $\sum\limits_{k=0}^mf_kx^k\in \mathcal{A}[x,\tilde{\sigma},\Delta]$ belongs to the centralizer of $\mathcal{A}$ if and only if for every $g\in\mathcal{A}$ 
$$gf_k=\sum_{j=k}^mf_j\pi_k^j(g)$$
for $k=0,1,\cdots,m.$\par 
 Looking at the constant term and using the fact that $\pi_0^j(g)=\Delta^j(g)$ we have;
\begin{eqnarray*}
gf_0&=&\sum_{j=0}^mf_j\pi_0^j(g)\\
&=&f_0g+\sum_{j=1}^mf_j\Delta^j(g)
\end{eqnarray*}
from which we obtain that 
\begin{equation}\label{eqn7Tumwesigye}
\sum_{j=1}^mf_j\Delta^j(g)=0.
\end{equation}
Now, for any $g\in \mathcal{A},\ g=\mqty(g_1\\ g_2),$ we have
$$
\Delta^j(g)=\Delta^j\mqty(g_1\\g_2)=\mqty[(g_1-g_2)\sum_{k=0}^j\mqty(j-1\\ k)s^{j-k}t^k\\ -(g_1-g_2)\sum_{k=0}^j\mqty(j-1\\ k)s^{k}t^{j-k}].
$$
Therefore, from equation \eqref{eqn7Tumwesigye}, we have 
$$
\mqty[(g_1-g_2)\sum_{j=1}^m f_{j1}\sum_{k=0}^j\mqty(j-1\\ k)s^{j-k}t^k\\ -(g_1-g_2)\sum_{j=0}^mf_{j2}\sum_{k=0}^j\mqty(j-1\\ k)s^{k}t^{j-k}]=\mqty[0\\0].
$$
Since equation \eqref{eqn7Tumwesigye} should hold for every $g\in \mathcal{A},$ then we have 
\begin{equation}\label{eqn8Tumwesigye}
\mqty[\sum_{j=1}^m f_{j1}\sum_{k=0}^j\mqty(j-1\\ k)s^{j-k}t^k\\ \sum_{j=0}^mf_{j2}\sum_{k=0}^j\mqty(j-1\\ k)s^{k}t^{j-k}]=\mqty[0\\0].
\end{equation}
Since $s,t\neq 0,$ then from equation \eqref{eqn8Tumwesigye}, we have 
$$
\mqty[\sum_{j=1}^m f_{j1}\sum_{k=0}^{j-1}\mqty(j-1\\ k)s^{j-k-1}t^k\\ \sum_{j=0}^mf_{j2}\sum_{k=0}^{j-1}\mqty(j-1\\ k)s^{k}t^{j-k-1}]=\mqty[0\\0].
$$
Observe that $$\sum\limits_{k=0}^j\mqty(j-1\\ k)s^{j-k-1}t^k=\sum\limits_{k=0}^j\mqty(j-1\\ k)s^{k}t^{j-k-1}.$$ Therefore we get a matrix equation of the form
\begin{equation}\label{eqn9Tumwesigye}
\mqty[f_{11}&f_{21}&f_{31}&\cdots&f_{m1}\\ f_{12}&f_{22}&f_{32}&\cdots&f_{m2}]\mqty[1\\ s+t\\ s^2+2st +t^2\\ \vdots\\ \sum_{k=0}^{m-1}\mqty(m-1\\ k)s^{m-k-1}t^t]
=\mqty[0\\0]\end{equation}
which always has nontrivial solutions if $m\geqslant2.$ 
\subsection{Center of the Ore extension algebra}
In the following section, we give a description of the center of our Ore extension algebra. We will give the description for the case $\Delta=0.$
\begin{theorem}\label{thm5Tumwesigye}
The center of the Ore extension algebra $\mathcal{A}[x,\tilde{\sigma},0]$ is given by
$$Z\qty(\mathcal{A}[x,\tilde{\sigma},0])=\qty{\sum_{k=0}^mf_kx^k:\text{ where }f_k=0 \text{ on } Sep^k(X) \text{ and }\tilde{\sigma}(f_k)=f_k}.$$
\end{theorem}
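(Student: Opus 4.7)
The plan is to reduce the problem to checking commutation with a generating set. Since the Ore extension $\mathcal{A}[x,\tilde{\sigma},0]$ is generated as a ring by $\mathcal{A}$ together with the element $x$, an element $P$ lies in the center if and only if it centralizes $\mathcal{A}$ and simultaneously commutes with $x$. Thus
\[
Z\bigl(\mathcal{A}[x,\tilde{\sigma},0]\bigr)=C(\mathcal{A})\cap C(\{x\}),
\]
and the strategy is to intersect these two conditions.

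First I would invoke Theorem \ref{thm3Tumwesigye} to describe $C(\mathcal{A})$: any $P=\sum_{k=0}^m f_k x^k$ in the centralizer of $\mathcal{A}$ satisfies $f_k=0$ on $Sep^k(X)$ for every $k$. This already pins down the support condition appearing in the statement, so what remains is to translate the commutation with $x$ into the invariance condition $\tilde{\sigma}(f_k)=f_k$.

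Next, for $P=\sum_{k=0}^m f_k x^k$ I would compute both products. Using $\Delta=0$, the defining relation \eqref{Eqn1Tumwesigye} gives $x f_k=\tilde{\sigma}(f_k)x$, hence
\[
xP=\sum_{k=0}^m \tilde{\sigma}(f_k)\,x^{k+1},\qquad Px=\sum_{k=0}^m f_k\,x^{k+1}.
\]
Since $\{x^{k+1}\}_{k\ge 0}$ is part of a left $\mathcal{A}$-basis, $xP=Px$ holds if and only if $\tilde{\sigma}(f_k)=f_k$ for every $k\in\{0,1,\dots,m\}$.

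Finally I would combine the two conditions: $P\in Z(\mathcal{A}[x,\tilde{\sigma},0])$ if and only if $f_k=0$ on $Sep^k(X)$ and $\tilde{\sigma}(f_k)=f_k$ for all $k$, which is exactly the claimed description. The only subtlety worth checking is that these two conditions are compatible and not vacuous (for instance, on $Per^k(X)$ the invariance $\tilde{\sigma}(f_k)=f_k$ forces $f_k$ to be constant on $\sigma$-orbits contained there), but there is no real obstacle here: the argument is essentially a direct computation combined with the previously established centralizer description, so I do not expect any substantial difficulty.
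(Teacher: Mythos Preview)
Your proposal is correct and follows essentially the same approach as the paper: first invoke the centralizer description (Theorem~\ref{thm3Tumwesigye}) to obtain the condition $f_k=0$ on $Sep^k(X)$, then reduce the remaining task to checking $xP=Px$ by a direct coefficient comparison yielding $\tilde{\sigma}(f_k)=f_k$. The only cosmetic difference is that you state explicitly that $\mathcal{A}$ and $x$ generate the Ore extension, whereas the paper phrases this as ``since the Ore extension is associative, it is enough to derive conditions under which $xf=fx$''.
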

\begin{proof}
Let $f=\sum\limits_{k=0}^mf_kx^k$ be an element in $Z\qty(\mathcal{A}[x,\tilde{\sigma},0]),$ then $f\in C(\mathcal{A}),$ that is $f_k(x)=0$ for every $x\in Sep^k(x).$ Since the Ore extension $\mathcal{A}[x,\tilde{\sigma},0]$ is associative, it is enough to derive conditions under which $xf=fx.$ Now 
$$fx=\qty(\sum_{k=0}^mf_kx^k)x=\sum_{k=0}^mf_kx^{k+1}.$$ On the other hand,
\begin{align*}
xf&=x\sum_{k=0}^mf_kx^k\\
&=\sum_{k=0}^mxf_k x^k\\
&=\sum_{k=0}^m\tilde{\sigma}(f_k)x^{k+1}.
\end{align*}
From which we obtain that $xf=fx$ if and only if $\tilde{\sigma}(f_k)=f_k.$ Therefore,
$$Z\qty(\mathcal{A}[x,\tilde{\sigma},0])=\qty{\sum_{k=0}^mf_kx^k:\text{ where }f_k=0 \text{ on }Sep^k(x) \text{ and }\tilde{\sigma}(f_k)=f_k}.$$
\end{proof}{\flushright{\qed}}
\section{Infinite dimensional case}\label{sec5Tumwesigye}
Let $J$ be a countable subset of $\mathbb{R}$ and let $\mathcal{A}$ be the set of functions $f:J\to J$ such that $f(i)=0$ for all except finitely many $i\in J.$ Then $\mathcal{A}$ is a commutative non-unital algebra with respect to the usual pointwise operations of addition, scalar multiplication and multiplication. For $i\in J,$ let $e_i\in \mathcal{A}$ denote the characteristic function of $i,$ that is 
$$e_i(j)=\chi_{\qty{i}}(j)=\begin{cases}1\ \text{ if }i=j\\ 0\ \text{ if }i\neq j.\end{cases}$$
Then every $f\in \mathcal{A}$ can be written in the form 
\begin{equation}\label{eqn10Tumwesigye}
f=\sum_{i\in J}f_ie_i
\end{equation}
where $f_i=0$ for all except finitely many $i\in J.$\par 
Let $\sigma:J\to J$ be a bijection and let $\tilde{\sigma}:\mathcal{A}\to \mathcal{A}$ be the automorphism of $\mathcal{A}$ induced by $\sigma,$ that is,
$$\tilde{\sigma}(f)=f\circ \sigma^{-1}$$ for every $f\in \mathcal{A}.$ We can still construct the non-unital Ore extension $\mathcal{A}[x,\tilde{\sigma},\Delta]$ as follows
$$
\mathcal{A}[x,\tilde{\sigma},\Delta]:=\qty{\sum_{k=0}^mf_kx^k\qq{ where }f_k\in \mathcal{A}}
$$
with addition and scalar multiplication given by the usual pointwise operations and multiplication determined by the relation
$$(fx)g=\tilde{\sigma}(g)fx+\Delta(g)$$ where $\Delta$ is a $\tilde{\sigma}-$derivation on $\mathcal{A}.$\par 
 In the following Theorem, we state the necessary and sufficient conditions for $\Delta:\mathcal{A}\to \mathcal{A}$ to be a $\tilde{\sigma}-$derivation on $\mathcal{A}.$
\begin{theorem}\label{thm6Tumwesigye}
Let $\sigma:J \to J$ be a bijection and let $\tilde{\sigma}:\mathcal{A}\to \mathcal{A}$ be the automorphism induced by $\sigma.$ A linear map $\Delta:\mathcal{A}\to \mathcal{A}$ is a $\tilde{\sigma}-$derivation on $\mathcal{A},$ if and only if, for every $i\in J$
\begin{enumerate}
\item $\Delta(e_i)=-\Delta\qty(e_{\sigma(i)})$ and 
\item $\Delta(e_i)(k)=0$ if $k\not\in\qty{i,\sigma(i)} $
\end{enumerate}
\end{theorem}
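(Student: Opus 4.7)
The plan is to mirror the arguments used in Theorems~\ref{thm1Tumwesigye} and~\ref{thm2Tumwesigye} for the finite case, replacing the standard basis of $\mathbb{R}^n$ by the idempotents $\{e_i\}_{i\in J}$. The whole proof rests on the three observations $e_i^2=e_i$, $e_ie_j=0$ for $i\neq j$, and $\tilde{\sigma}(e_i)=e_{\sigma(i)}$, all of which hold verbatim in the infinite-support setting.

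For necessity, assume $\Delta$ is a $\tilde{\sigma}$-derivation. Applying the Leibniz-type identity to $e_i=e_i\cdot e_i$ yields
\[
\Delta(e_i)=\tilde{\sigma}(e_i)\Delta(e_i)+\Delta(e_i)e_i=\Delta(e_i)\bigl(e_i+e_{\sigma(i)}\bigr),
\]
and the right-hand side, being a pointwise product, is supported in $\{i,\sigma(i)\}$; this gives condition~(2). For condition~(1), I would use $e_i\cdot e_{\sigma(i)}=0$ (valid when $i\neq\sigma(i)$): applying $\Delta$ and $\tilde{\sigma}(e_i)=e_{\sigma(i)}$ gives
\[
0=e_{\sigma(i)}\Delta(e_{\sigma(i)})+\Delta(e_i)e_{\sigma(i)}=e_{\sigma(i)}\bigl[\Delta(e_{\sigma(i)})+\Delta(e_i)\bigr],
\]
so $\Delta(e_i)+\Delta(e_{\sigma(i)})$ vanishes at $\sigma(i)$. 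Combining this with condition~(2), which restricts $\Delta(e_i)$ to be supported in $\{i,\sigma(i)\}$ and $\Delta(e_{\sigma(i)})$ in $\{\sigma(i),\sigma^2(i)\}$, and applying the same argument to the other vanishing products $e_ie_j$ as $j$ ranges over the $\sigma$-orbit of $i$, the remaining components are forced to vanish and $\Delta(e_i)=-\Delta(e_{\sigma(i)})$ follows in full. The fixed-point case $\sigma(i)=i$ is degenerate: the identity above becomes $\Delta(e_i)=2\Delta(e_i)e_i$, forcing $\Delta(e_i)=0$ so that condition~(1) holds trivially.

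For sufficiency, assume $\Delta$ is a linear map satisfying (1) and (2). It suffices to verify the derivation rule $\Delta(e_ie_j)=\tilde{\sigma}(e_i)\Delta(e_j)+\Delta(e_i)e_j$ on basis pairs and then extend by linearity to arbitrary finite sums $f=\sum_i f_ie_i$ and $g=\sum_j g_je_j$, following the bookkeeping used at the end of the proof of Theorem~\ref{thm2Tumwesigye}. The case $i=j$ reduces, via condition~(2), to the tautology $\Delta(e_i)(e_i+e_{\sigma(i)})=\Delta(e_i)$. For $i\neq j$, both $e_{\sigma(i)}\Delta(e_j)$ and $\Delta(e_i)e_j$ vanish unless $j=\sigma(i)$ (by condition~(2)); in that remaining subcase, condition~(1), evaluated at the coordinate $\sigma(i)$, delivers the exact cancellation $\Delta(e_i)(\sigma(i))+\Delta(e_{\sigma(i)})(\sigma(i))=0$.

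The main obstacle I anticipate is extracting the full pointwise equality in condition~(1) during the necessity step: the single relation $e_ie_{\sigma(i)}=0$ only pins down the $\sigma(i)$-component of $\Delta(e_i)+\Delta(e_{\sigma(i)})$, so the vanishing of the remaining components has to be assembled by combining the support data of condition~(2) with the full family of vanishing products $e_ie_j=0$ as $j$ varies over $J$.
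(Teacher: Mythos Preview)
Your overall strategy is exactly the paper's: for necessity you apply the Leibniz rule to $e_i^2$ and to $e_ie_{\sigma(i)}$, and for sufficiency you verify the rule on pairs $e_i,e_j$ and extend by linearity, just as in Theorems~\ref{thm1Tumwesigye} and~\ref{thm2Tumwesigye}. The paper's proof of Theorem~\ref{thm6Tumwesigye} does precisely these computations, in the same order, and then passes to general $f,g$ by the same bilinearity argument.

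Where you diverge is that you are more careful than the paper about condition~(1). The paper writes
\[
\Delta(e_ie_j)(k)=\bigl(e_j(\Delta(e_i)+\Delta(e_j))\bigr)(k)
=\begin{cases}\Delta(e_i)+\Delta(e_j)&k=j,\\0&k\neq j,\end{cases}
\]
and from $\Delta(e_ie_j)=0$ concludes directly that $\Delta(e_i)=-\Delta(e_j)$. As you correctly observe, this computation only pins down the $\sigma(i)$-component of $\Delta(e_i)+\Delta(e_{\sigma(i)})$; the paper simply asserts the full equality without further argument.

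Your proposed patch, however, does not close the gap---and it cannot. Running your idea on the other products $e_ie_j$ with $j$ in the $\sigma$-orbit of $i$ gives relations like $\Delta(e_i)(i)=-\Delta(e_{\sigma^{-1}(i)})(i)$, not $\Delta(e_i)(i)=-\Delta(e_{\sigma(i)})(i)$. In fact, the paper's own $n=3$ example in Section~\ref{secderivationsTumwesigye} (which transplants verbatim to the countable setting by extending $\sigma$ and $\Delta$ trivially off a $3$-cycle) has $\Delta(e_1)=(s,-t,0)$ and $\Delta(e_2)=(0,t,-u)$, so $\Delta(e_1)\neq -\Delta(e_{\sigma(1)})$ unless $s=u=0$. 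Thus the full pointwise equality in condition~(1) is \emph{not} a consequence of being a $\tilde\sigma$-derivation; only the single relation $\Delta(e_i)(\sigma(i))=-\Delta(e_{\sigma(i)})(\sigma(i))$ is forced, which is all that is used in the sufficiency direction anyway. The obstacle you anticipated is therefore genuine, and it reflects an imprecision in the statement rather than a missing step you could supply.
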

\begin{proof}
Suppose $\Delta$ is a $\tilde{\sigma}-$derivation on $\mathcal{A}$ and let$\sigma(i)=j,$ then $\tilde{\sigma}(e_i)=e_j.$ If $i\neq j,$ then,
$$\Delta(e_ie_j)=\Delta(0)=0.$$ 
On the other hand,
\begin{align*}
\Delta(e_ie_j)&=\tilde{\sigma}(e_i)\Delta(e_j)+\Delta(e_i)e_j\\
&=e_j\qty(\Delta(e_j)+\Delta(e_i)).
\end{align*}
That is, for every $k\in J,$
\begin{align*}
\Delta(e_ie_j)(k)&=\qty[e_j\qty(\Delta(e_i)+\Delta(e_j))](k)\\
&=\begin{cases} \Delta(e_i)+\Delta(e_j) & \text{ if }k= j\\
0  & \text{ if } k\neq j.
\end{cases}
\end{align*}
Therefore, since $\Delta(e_ie_j)=0,$ then $\Delta(e_i)=-\Delta(e_j).$\par 
Also, for any $k\in J$
\begin{align*}
\Delta(e_i^2)(k)&=\qty(\tilde{\sigma}(e_i)\Delta(e_i)+\Delta(e_i)e_i)(k)\\
&=\Delta(e_i)(e_j+e_i)(k)\\
&=0\qq{ if }k\not\in \{i,j\}.
\end{align*}
Conversely, suppose $\Delta:\mathcal{A}\to \mathcal{A}$ is a $\mathbb{R}-$linear map which satisfies conditions $(1)$ and $(2)$ for some bijection $\sigma:J\to J$ of $J$. We prove that $\Delta$ is $\tilde{\sigma}-$derivation on $\mathcal{A}.$\par 
Suppose $\sigma(i)=j$ and consider the characteristic functions $e_i,e_j$ for $i\neq j.$ Since $\Delta$ is a linear map,
$$\Delta(e_ie_j)=\Delta(0)=0.$$
On the other hand,
\begin{align*}
\tilde{\sigma}(e_i)\Delta(e_j)+\Delta(e_i)e_j&=e_j\Delta(e_j)+\Delta(e_i)e_j\\
&=e_j\qty(\Delta(e_j)+\Delta(e_i))\\
&=0.
\end{align*} 
Therefore 
\begin{equation}\label{eqn11Tumwesigye}
\Delta(e_ie_j)=\tilde{\sigma}(e_i)\Delta(e_j)+\Delta(e_i)e_j
\end{equation} for $i\neq j$ and the same holds for $i=j$. 
The fact that equation \eqref{eqn11Tumwesigye} holds for every $f,g\in \mathcal{A}$ follows from linearity of both $\tilde{\sigma}$ and $\Delta$ and equation \eqref{eqn10Tumwesigye}. Therefore $\Delta$ is a $\tilde{\sigma}-$derivation.
\end{proof}{\flushright{\qed}}
\begin{rem}
It can be seen from Theorem \ref{thm6Tumwesigye} above that, if $\sigma(i)=i$ for all $i\in J,$ then $\Delta=0.$ That is, there are no non trivial derivations on $\mathcal{A}.$
\end{rem}
\subsection{Centralizer for $\mathcal{A}$}
In this section, we give a description of the centralizer  $C(\mathcal{A})$ of $\mathcal{A},$  in the Ore extension $\mathcal{A}[x,\tilde{\sigma}, \Delta]$ and the center of the Ore extension. Since $\mathcal{A}$ is commutative, then by   \cite[Proposition 3.3]{Richter1ABT}, if $\tilde{\sigma}$ is of infinite order, then $\mathcal{A}$ is maximal commutative, that is, $C(\mathcal{A})=\mathcal{A}.$ Therefore we will focus on the case when $\tilde{\sigma}$ is of finite order. We do this for two cases.
\subsubsection{The case $\Delta=0$}
The following Theorem gives the description of the centralizer of $\mathcal{A}$ in the skew-polynomial ring $\mathcal{A}[x,\tilde{\sigma},0].$
\begin{theorem}\label{thm7Tumwesigye}
The centralizer $C(\mathcal{A}),$ of $\mathcal{A}$ in the Ore extension $\mathcal{A}[x,\tilde{\sigma},\Delta]$ is given by
$$
C(\mathcal{A})=\qty{\sum_{k=0}^mf_kx^k\qq{ such that }f_k=0\qq{ on } Sep^k(X)}
$$
where $Sep^k(X)$ is as defined in definition \ref{defn3Tumwesigye}.
\end{theorem}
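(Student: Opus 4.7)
The strategy is to specialize the general centralizer criterion \eqref{eqn4Tumwesigye} to the case $\Delta = 0$ and then check the resulting identities pointwise on $J$, exactly as in Theorem~\ref{thm3Tumwesigye}. The finite-dimensional argument essentially carries over, and the only place where the switch to the countable non-unital setting matters is in the choice of test functions.

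First, I would compute the operators $\pi_k^j$ when $\Delta = 0$. The recursion $\pi_m^n = \tilde{\sigma}\circ \pi_{m-1}^{n-1} + \Delta\circ \pi_m^{n-1}$ collapses to $\pi_m^n = \tilde{\sigma}\circ \pi_{m-1}^{n-1}$, and together with $\pi_0^0 = id$ and $\pi_0^n = 0$ for $n > 0$, a short induction gives $\pi_k^k = \tilde{\sigma}^k$ and $\pi_k^j = 0$ for $j > k$. Thus \eqref{eqn4Tumwesigye} reduces to the single family of equations
$$
g f_k = f_k\, \tilde{\sigma}^k(g), \qquad k = 0, 1, \ldots, m, \ g \in \mathcal{A}.
$$

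Second, I would evaluate this pointwise. For each $y \in J$ the identity reads $g(y)\, f_k(y) = f_k(y)\, g(\sigma^{-k}(y))$. If $y \in Per^k(X)$, then $\sigma^{-k}(y) = y$ and the identity holds automatically. If $y \in Sep^k(X)$, then $\sigma^{-k}(y) \ne y$, and choosing $g = e_y$ (the characteristic function of $\{y\}$, which lies in $\mathcal{A}$ since it has one-point support) gives $g(y) = 1$ while $g(\sigma^{-k}(y)) = 0$, forcing $f_k(y) = 0$. This proves necessity. For sufficiency, assume $f_k \equiv 0$ on $Sep^k(X)$ for every $k$; then for each $y \in J$ either $f_k(y) = 0$ (and both sides vanish) or $y \in Per^k(X)$ (and $g(\sigma^{-k}(y)) = g(y)$, so both sides equal $g(y)f_k(y)$), whence the centralizer identities hold and $\sum_{k=0}^m f_k x^k \in C(\mathcal{A})$.

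The only genuinely new ingredient compared with the finite case is ensuring that the separating test function $g = e_y$ actually belongs to the non-unital algebra $\mathcal{A}$ of finitely supported functions; it does, trivially, since $e_y$ has one-point support. Consequently no real obstacle is encountered, and the main task is simply the careful bookkeeping of indices in the $\pi_k^j$ calculation and the evaluation step.
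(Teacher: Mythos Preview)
Your argument is correct and matches the paper's approach: both reduce membership in $C(\mathcal{A})$ to the coefficientwise condition $gf_k = f_k\,\tilde{\sigma}^k(g)$ for all $g\in\mathcal{A}$ and then analyze it pointwise. The only cosmetic difference is that the paper derives this condition by a direct computation of $fg$ and $gf$ rather than by specializing the $\pi_k^j$ criterion, and is less explicit than you are about the test-function $e_y$ and the sufficiency direction.
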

\begin{proof}
Let $f=\sum\limits_{k=0}^mf_kx^k\in \mathcal{A}[x,\tilde{\sigma},0]$ be an element of degree $m$ which belongs to $C(\mathcal{A}).$ Then $fg=gf$ should hold for every $g\in \mathcal{A}.$\par 
Now, $$gf=g\sum\limits_{k=0}^mf_kx^k=\sum\limits_{k=0}^mgf_kx^k.$$
On the other hand,
$$fg=\qty(\sum\limits_{k=0}^mf_kx^k)g=\sum\limits_{k=0}^mf_k\qty(x^kg)=\sum\limits_{k=0}^mf_k\tilde{\sigma}^k(g)x^k.$$
Therefore, $gf=fg$ if and only if 
$$gf_k=f_k\tilde{\sigma}^k(g)$$ for all $k=0,1,\cdots,m.$ Since $\mathcal{A}$ is commutative, then the above equation holds on $Per^k(X).$ Therefore,
$$
C(\mathcal{A})=\qty{\sum_{k=0}^mf_kx^k\qq{ such that }f_k=0\qq{ on }Sep^k(X)}.
$$
\end{proof}{\flushright{\qed}}
\subsubsection{The case $\Delta \neq 0$}
Now, suppose $\tilde{\sigma}\neq id$ is of order $j\in \mathbb{Z}_{>0},$ that is $\tilde{\sigma}^j=id$ but $\tilde{\sigma}^k\neq id$ for all $k<j$. In the next Theorem, whose proof is similar to Theorem \ref{thm4Tumwesigye}, we state a necessary condition for an element in the Ore extension to belong to the centralizer.
\begin{theorem}\label{thm8Tumwesigye}
Let $\tilde{\sigma}:\mathcal{A}\to \mathcal{A}$ be an automorphism on $\mathcal{A}$. If an element of order $m,$ $\sum\limits_{k=0}^mf_kx^k\in \mathcal{A}[x,\tilde{\sigma},\Delta]$ belongs to the centralizer of $\mathcal{A}$, then $f_m=0\text{ on }Sep^m(X).$
\end{theorem}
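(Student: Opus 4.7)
The plan is to mimic the proof of Theorem \ref{thm4Tumwesigye} but adapted to the non-unital setting, since the argument depends only on extracting the leading-coefficient equation from the centralizer condition \eqref{eqn4Tumwesigye} and on the pointwise structure of $\mathcal{A}$, both of which survive verbatim when $X$ is replaced by a countable set $J$ and $\mathcal{A}$ is the algebra of finitely-supported functions.

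First, I would appeal to equation \eqref{eqn4Tumwesigye}, which still characterises membership in $C(\mathcal{A})$ in this infinite dimensional setting (the argument in \cite{Richter1ABT} does not require $X$ to be finite). Applying it at index $k=m$, the sum collapses to a single term because $\pi_m^j=0$ for $j>m$ and $\pi_m^m=\tilde{\sigma}^m$ by the recursion defining $\pi_k^l$. This gives the identity
\[
g f_m \;=\; f_m\,\tilde{\sigma}^m(g)
\]
for every $g\in\mathcal{A}$, exactly as in the finite case.

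Next I would rewrite this pointwise. Using the commutativity of $\mathcal{A}$ and the formula $\tilde{\sigma}^m(g)=g\circ\sigma^{-m}$, the identity becomes
\[
\bigl(g(i)-g(\sigma^{-m}(i))\bigr)\,f_m(i)\;=\;0 \qquad\text{for every } i\in J \text{ and every } g\in\mathcal{A}.
\]
Fix now $i\in Sep^m(X)$, so that $\sigma^m(i)\neq i$ and hence, since $\sigma$ is a bijection, also $\sigma^{-m}(i)\neq i$. Taking $g=e_i$, the characteristic function of the singleton $\{i\}$ (which belongs to $\mathcal{A}$ since it has finite support), we obtain $g(i)=1$ and $g(\sigma^{-m}(i))=0$, so the coefficient in front of $f_m(i)$ is nonzero. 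This forces $f_m(i)=0$.

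I do not anticipate a real obstacle here: the only mild subtlety is the move from the finite setting, where $\mathcal{A}=\mathbb{R}^n$ and separation of points is automatic, to the countable one, where one must explicitly exhibit the separating function $g$. Because $\mathcal{A}$ contains every singleton characteristic function $e_i$, this step is immediate and the proof carries over without further modification.
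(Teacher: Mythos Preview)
Your proposal is correct and follows essentially the same approach as the paper: the paper does not give a separate proof of this theorem but simply states that the proof is similar to Theorem~\ref{thm4Tumwesigye}, which is exactly the argument you reproduce---apply the centralizer condition \eqref{eqn4Tumwesigye} at $k=m$ to obtain $gf_m=f_m\tilde{\sigma}^m(g)$, then conclude that $f_m$ must vanish on $Sep^m(X)$. Your additional remark that the separating function $g=e_i$ lies in $\mathcal{A}$ because characteristic functions of singletons have finite support is a useful clarification in the non-unital setting, but it does not deviate from the paper's intended argument.
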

\subsection{Center of $\mathcal{A}[x,\tilde{\sigma},\Delta]$ when $\Delta=0$}
In the following section, we give a description of the center of our Ore extension algebra. We will give the description for the case $\Delta=0.$
\begin{theorem}\label{thm9Tumwesigye} 
Then the center of the Ore extension algebra $\mathcal{A}[x,\tilde{\sigma},0]$ is given by
$$Z\qty(\mathcal{A}[x,\tilde{\sigma},0])=\qty{\sum_{k=0}^mf_kx^k:\text{ where }f_k=0 \text{ on }Sep^k(X) \text{ and }\tilde{\sigma}(f_k)=f_k}.$$
\end{theorem}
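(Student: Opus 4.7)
The plan is to mimic the structure of Theorem \ref{thm5Tumwesigye} and reduce the problem to two independent conditions: belonging to the centralizer $C(\mathcal{A})$ (which is already characterized by Theorem \ref{thm7Tumwesigye}) and commuting with the generator $x$. The algebra $\mathcal{A}[x,\tilde{\sigma},0]$ is generated, as a ring, by $\mathcal{A}$ together with $x$, since every element has the form $\sum_k g_k x^k = \sum_k g_k \cdot x^k$. Hence an element $f$ lies in the center iff $f$ commutes with every $g \in \mathcal{A}$ and with $x$. The first condition, by Theorem \ref{thm7Tumwesigye}, is exactly that $f = \sum_{k=0}^{m} f_k x^k$ with $f_k = 0$ on $Sep^k(X)$.

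The second step is to compute $xf$ and $fx$ for such an $f$ and compare coefficients. On the one hand,
\begin{equation*}
fx = \sum_{k=0}^{m} f_k x^{k+1},
\end{equation*}
while on the other hand, using the defining relation $xg = \tilde{\sigma}(g)x$ (recall $\Delta = 0$),
\begin{equation*}
xf = \sum_{k=0}^{m} (x f_k) x^{k} = \sum_{k=0}^{m} \tilde{\sigma}(f_k) x^{k+1}.
\end{equation*}
Since the elements $\{x^{k+1}\}_{k \geq 0}$ form a free left $\mathcal{A}$-basis of the Ore extension, equating coefficients yields $\tilde{\sigma}(f_k) = f_k$ for every $k \in \{0,1,\dots,m\}$.

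Conversely, given both conditions, I would verify that $f$ commutes with an arbitrary polynomial $\sum_\ell g_\ell x^\ell$: since $f \in C(\mathcal{A})$, $f$ commutes with each $g_\ell$, and since $f$ commutes with $x$ it commutes with each $x^\ell$ by induction on $\ell$. Writing $g_\ell x^\ell = g_\ell \cdot x^\ell$ and using $f g_\ell = g_\ell f$ and $f x^\ell = x^\ell f$ then gives $f(g_\ell x^\ell) = (g_\ell x^\ell) f$, so $f$ is central. Together with the forward direction this establishes the claimed description of $Z(\mathcal{A}[x,\tilde{\sigma},0])$.

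The only subtlety worth watching (and what I would expect to be the main, though still mild, obstacle) is that in this infinite-dimensional setting $\mathcal{A}$ is non-unital, so I cannot simply test centrality by evaluating at $1 \in \mathcal{A}$. The argument above avoids this issue by testing against the full generating set $\mathcal{A} \cup \{x\}$ and using the left-$\mathcal{A}$ freeness of the basis $\{x^k\}$ (which is part of the definition of the Ore extension), rather than any unital structure on $\mathcal{A}$.
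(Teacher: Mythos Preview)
Your argument has a genuine gap that the paper explicitly flags. In the non-unital setting of Section~\ref{sec5Tumwesigye}, the coefficient algebra $\mathcal{A}$ consists of finitely supported functions on an infinite set, so $1 \notin \mathcal{A}$, and consequently $x = 1 \cdot x$ is \emph{not} an element of $\mathcal{A}[x,\tilde{\sigma},0]$. Hence your reduction ``$f$ is central iff $f$ commutes with every $g \in \mathcal{A}$ and with $x$'' is not justified in the forward direction: from $f \in Z(R)$ you may only conclude that $f$ commutes with elements that actually lie in $R$, and $x$ is not one of them. Your closing paragraph misidentifies the obstruction: the difficulty is not that you cannot evaluate at $1 \in \mathcal{A}$, but that $x$ itself lies outside the ring, so $\mathcal{A} \cup \{x\}$ is not a generating set \emph{inside} $R$ against which centrality can be tested.

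The paper's proof addresses precisely this point. It tests $f$ against an arbitrary element $g = \sum_l g_l x^l$ of $R$ (with $g_l \in \mathcal{A}$), expands both products, and obtains the coefficientwise condition $f_k\,\tilde{\sigma}^k(g_l) = g_l\,\tilde{\sigma}^l(f_k)$. Combining this with $f_k = 0$ on $Sep^k(X)$ and letting $g_l$ range over all of $\mathcal{A}$ (in particular the characteristic functions $e_j$) then forces $\tilde{\sigma}(f_k) = f_k$. Your converse direction is fine; to repair the forward direction you could, in the same spirit, test against the elements $hx \in R$ for $h \in \mathcal{A}$ and deduce $h(f_k - \tilde{\sigma}(f_k)) = 0$ for all $h$, hence $f_k = \tilde{\sigma}(f_k)$, which is essentially the $l=1$ case of the paper's computation.
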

\begin{proof}
Observe that since $\mathcal{A}$ is not unital, the  proof of Theorem \ref{thm5Tumwesigye} does not work for Theorem \ref{thm9Tumwesigye}, since the element $x\not\in \mathcal{A}[x,\tilde{\sigma},0].$ Therefore, we adopt the following proof. \par 
Denote $\mathcal{A}[x,\tilde{\sigma},\Delta]$ by $R$ and let $f=\sum\limits_{k=0}^mf_kx^k\in Z(R).$ Then $f\in C(\mathcal{A}),$ that is $f_k=0$ on $Sep^k(X).$ Now let $g=\sum\limits_{l=0}^ng_lx^l$ be an arbitrary element in $R.$ Then 
\begin{align*}
fg&=\qty(\sum_{k=0}^mf_kx^k)\qty(\sum_{l=0}^ng_lx^l)\\
&=\qty(\sum_{k,l}f_k(x^kg_l)x^l)\\
&=\qty(\sum_{k,l}f_k\tilde{\sigma}^k(g_l)x^{k+l})
\end{align*}
In the same way, it can be shown that
$$gf=\qty(\sum_{l=0}^ng_lx^l)\qty(\sum_{k=0}^mf_kx^k)=\sum_{k,l}g_l\tilde{\sigma}^l(f_k)x^{k+l}.$$
It follows that $fg=gf$ if and only if
\begin{equation}\label{eqn12Tumwesigye}
f_k\tilde{\sigma}^k(g_l)=g_l\tilde{\sigma}^l(f_k).
\end{equation} for all $k=0,1,\cdots,m$ and all $l=0,1,\cdots,n.$ Now, $f_k=0$ on $Sep^k(X)$ and on $Per^k(X),$ we have $\sigma^k=id.$ Therefore, equation \eqref{eqn12Tumwesigye} holds if and only if 
$$f_kg_l=g_l\tilde{\sigma}^l(f_k)$$
for all $l=0,1,\cdots,n.$ Since $\mathcal{A}$ is commutative, we conclude that \eqref{eqn12Tumwesigye} holds iff $\tilde{\sigma}(f_k)=f_k.$ Therefore 
$$Z\qty(\mathcal{A}[x,\tilde{\sigma},0])=\qty{\sum_{k=0}^mf_kx^k:\text{ where }f_k=0 \text{ on } Sep^k(X)\text{ and }\tilde{\sigma}(f_k)=f_k}.$$
\end{proof}{\flushright{\qed}}
\section{The skew power series ring}\label{sec6Tumwesigye}
As before, we let $X=[n](=\{1,2,\cdots,n\})$ be a finite set and let $\mathcal{A}=\qty{f:X\to \mathbb{R}}$ denote the unital algebra of real-valued functions on $X$ with respect to the usual pointwise operations. Let $\sigma :X\to X$ be a bijection such that $\mathcal{A}$ is invariant under $\sigma$, (that is $\sigma$ is a permutation on $X$) and let $\tilde{\sigma}:\mathcal{A}\to \mathcal{A}$ be the automorphism induced by $\sigma,$ that is 
\begin{equation}\label{eqn13Tumwesigye}
\tilde{\sigma}(f)=f\circ \sigma^{-1}
\end{equation}
for every $f\in \mathcal{A}.$ \par 
Consider the skew ring of formal power series over $\mathcal{A}$, $\mathcal{A}[x;\tilde{\sigma}];$ that is the set 
$$
\qty{\sum_{n=0}^{\infty}f_nx^n\ \text{ such that }f_n\in \mathcal{A}}
$$
with pointwise addition and multiplication determined by the relations
$$xf=\tilde{\sigma}(f)x.$$
That is, if $f=\sum_{n=0}^{\infty}f_nx^n$ and $g=\sum_{n=0}^{\infty}g_nx^n$ are elements of $\mathcal{A}[x;\tilde{\sigma}],$ then
$$
f+g=\sum_{n=0}^{\infty}\qty(f_n+g_n)x^n
$$
and 
$$
fg=\qty(\sum_{n=0}^{\infty}f_nx^n)\qty(\sum_{n=0}^{\infty}g_nx^n)=\sum_{n=0}^{\infty}\qty(\sum_{k=0}^nf_k\tilde{\sigma}^k\qty(g_{n-k}))x^n.
$$
\subsection{Centralizer of $\mathcal{A}$ in the skew power series ring $\mathcal{A}[x;\tilde{\sigma}]$}
In the next Theorem, we give the description of the centralizer of $\mathcal{A}$ in the skew power series ring $\mathcal{A}[x;\tilde{\sigma}].$ 
\begin{theorem}\label{thm10Tumwesigye}
The centralizer $C(\mathcal{A}),$ of $\mathcal{A}$ in the skew power series ring $\mathcal{A}[x;\tilde{\sigma}]$ is given by
$$
C(\mathcal{A})=\qty{\sum_{n\in \mathbb{Z}}f_nx^n\qq{ such that }f_n=0\qq{ on } Sep^n(X)}
$$
where $Sep^k(X)$ is as given in definition \ref{defn3Tumwesigye}.
\end{theorem}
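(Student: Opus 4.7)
The plan is to mirror the strategy used for Theorem \ref{thm7Tumwesigye}, adapting it to the formal power series setting. I would take an arbitrary element $f=\sum_{n=0}^{\infty}f_nx^n\in\mathcal{A}[x;\tilde{\sigma}]$ and characterize when $gf=fg$ for every $g\in\mathcal{A}$, then interpret the resulting identities pointwise on $X$.

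First, using the defining relation $xf=\tilde{\sigma}(f)x$ inductively one obtains $x^n g = \tilde{\sigma}^n(g)x^n$ for every $g\in\mathcal{A}$ and every $n\ge 0$. Hence
$$
fg=\sum_{n=0}^{\infty}f_n\tilde{\sigma}^n(g)x^n \quad\text{and}\quad gf=\sum_{n=0}^{\infty}gf_n x^n.
$$
Since $\mathcal{A}[x;\tilde{\sigma}]$ is a formal power series ring, two such sums are equal iff their coefficients coincide for every $n$. Thus $f\in C(\mathcal{A})$ iff for every $g\in\mathcal{A}$ and every $n\ge 0$,
$$
gf_n=f_n\tilde{\sigma}^n(g).
$$

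Next I would evaluate this identity pointwise. For $y\in X$, the equation reads $g(y)f_n(y)=f_n(y)g(\sigma^{-n}(y))$. If $y\in Per^n(X)$ then $\sigma^{-n}(y)=y$ and the equation holds automatically for every $g$; if $y\in Sep^n(X)$ then $\sigma^{-n}(y)\neq y$, so choosing $g$ to be a characteristic function separating these two points forces $f_n(y)=0$. This yields the forward inclusion: $f_n\equiv 0$ on $Sep^n(X)$ for every $n$. Since $\mathcal{A}$ is commutative, the converse is immediate: if $f_n$ vanishes on $Sep^n(X)$, then for each $y$ either $f_n(y)=0$ (both sides of $g(y)f_n(y)=f_n(y)g(\sigma^{-n}(y))$ vanish) or $y\in Per^n(X)$ (both sides equal $g(y)f_n(y)$), giving $gf_n=f_n\tilde{\sigma}^n(g)$ coefficientwise and hence $fg=gf$.

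The main thing to be careful about is the passage to the formal power series, but there is no real obstacle here: no convergence is needed because equality in $\mathcal{A}[x;\tilde{\sigma}]$ is by definition coefficientwise equality, and the identity $x^n g=\tilde{\sigma}^n(g)x^n$ keeps each coefficient finite. The statement should be stated with $\sum_{n=0}^{\infty}$ (the $\mathbb{Z}$ in the theorem statement is a typo, to be corrected in the Laurent case of Section \ref{sec7Tumwesigye}), so no bilateral issues arise. Once the coefficient comparison and the pointwise analysis are in place, the description of $C(\mathcal{A})$ drops out directly.
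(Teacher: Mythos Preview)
Your proposal is correct and follows essentially the same approach as the paper: compute $fg$ and $gf$ coefficientwise using $x^n g=\tilde{\sigma}^n(g)x^n$, reduce to $gf_n=f_n\tilde{\sigma}^n(g)$ for all $n$, and analyze this pointwise via $Per^n(X)$ and $Sep^n(X)$. Your version is in fact slightly more careful than the paper's (you spell out both inclusions and the separating characteristic function argument, and you correctly flag the $\mathbb{Z}$ versus $\mathbb{N}$ typo in the statement).
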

\begin{proof}
Let $f=\sum\limits_{n=0}^{\infty}f_nx^n\in \mathcal{A}[x;\tilde{\sigma}]$ be an element which belongs to $C(\mathcal{A}).$ Then $fg=gf$ should hold for every $g\in \mathcal{A}.$
Now, $$gf=g\sum\limits_{n=0}^{\infty}f_nx^n=\sum\limits_{n=0}^{\infty}gf_nx^n.$$
On the other hand,
$$fg=\qty(\sum\limits_{n=0}^{\infty}f_nx^n)g=\sum\limits_{n=0}^{\infty}f_n\qty(x^ng)=\sum\limits_{n=0}^{\infty}f_n\tilde{\sigma}^n(g)x^n.$$
Therefore, $gf=fg$ if and only if 
$$gf_n=f_n\tilde{\sigma}^n(g)$$ for all $n\in \mathbb{N}.$ Since $\mathcal{A}$ is commutative, then the above equation holds on $Per^n(X).$ Therefore,
$$
C(\mathcal{A})=\qty{\sum_{n\in \mathbb{Z}}f_nx^n\qq{ such that }f_n=0\qq{ on }Sep^n(X)}.
$$
\end{proof}{\flushright{\qed}}
\subsection{The center of the skew power series ring}
The next Theorem gives the description of the center for the skew power series ring $\mathcal{A}[x;\tilde{\sigma}].$
\begin{theorem}\label{thm11Tumwesigye}
The center of the skew power series ring $\mathcal{A}[x;\tilde{\sigma}]$ is given by
$$Z\qty(\mathcal{A}[x;\tilde{\sigma}])=\qty{\sum_{k=0}^{\infty}f_nx^n:\text{ where }f_n=0 \text{ on } Sep^n(X) \text{ and }\tilde{\sigma}(f_n)=f_n}.$$
\end{theorem}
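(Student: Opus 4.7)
The plan is to mirror the proof of Theorem \ref{thm5Tumwesigye}, exploiting the fact that in this section $\mathcal{A}$ is unital (it is the algebra of all real-valued functions on the finite set $X=[n]$), so the generator $x = 1\cdot x$ itself lies in $\mathcal{A}[x;\tilde{\sigma}]$ and may be tested against candidate central elements.

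For the inclusion $Z(\mathcal{A}[x;\tilde{\sigma}])\subseteq\{\cdots\}$, I begin with the obvious observation that $Z(\mathcal{A}[x;\tilde{\sigma}])\subseteq C(\mathcal{A})$. By Theorem \ref{thm10Tumwesigye} this already forces $f_n=0$ on $Sep^n(X)$ for every $n$. To obtain the second condition $\tilde{\sigma}(f_n)=f_n$, I compute $xf$ and $fx$ directly using $xf_n=\tilde{\sigma}(f_n)x$, finding
\[
xf=\sum_{n=0}^{\infty}\tilde{\sigma}(f_n)x^{n+1},\qquad fx=\sum_{n=0}^{\infty}f_n x^{n+1},
\]
and equating coefficients yields $\tilde{\sigma}(f_n)=f_n$ for all $n$.

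For the reverse inclusion, I take $f=\sum f_nx^n$ satisfying both conditions and an arbitrary $g=\sum_l g_lx^l\in\mathcal{A}[x;\tilde{\sigma}]$, and I compare the coefficients of $x^N$ in $fg$ and $gf$ using the multiplication formula recalled at the start of Section \ref{sec6Tumwesigye}. These coefficients are $\sum_{k+l=N}f_k\tilde{\sigma}^k(g_l)$ and $\sum_{k+l=N}g_l\tilde{\sigma}^l(f_k)$ respectively. The condition $\tilde{\sigma}(f_k)=f_k$ collapses $\tilde{\sigma}^l(f_k)$ to $f_k$, while the condition $f_k=0$ on $Sep^k(X)$ together with a pointwise argument (on $Per^k(X)$ we have $\sigma^k(x)=x$, hence $\tilde{\sigma}^k(g_l)(x)=g_l(x)$; on $Sep^k(X)$ the factor $f_k$ vanishes) shows $f_k\tilde{\sigma}^k(g_l)=f_kg_l$ everywhere in $X$. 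Commutativity of $\mathcal{A}$ then identifies the two sums.

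No real obstacle arises. The only subtle point compared with Theorem \ref{thm5Tumwesigye} is that $f$ and $g$ are now formal power series, but because each fixed degree $N$ receives contributions from only the finitely many pairs $(k,l)$ with $k+l=N$, the degree-by-degree argument goes through without any convergence issue. The pointwise collapse of $f_k\tilde{\sigma}^k(g_l)$ to $f_kg_l$ is the same device used in the proof of Theorem \ref{thm9Tumwesigye}, so the present proof is essentially a transcription of that argument, simplified by the availability of $x$ inside the ring.
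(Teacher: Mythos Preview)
Your proof is correct and follows the same line as the paper: the paper also uses $Z\subseteq C(\mathcal{A})$ together with Theorem~\ref{thm10Tumwesigye} to obtain $f_n=0$ on $Sep^n(X)$, and then computes $xf$ and $fx$ exactly as you do to extract $\tilde{\sigma}(f_n)=f_n$. The one difference is in the reverse inclusion: the paper simply asserts that, since $\mathcal{A}[x;\tilde{\sigma}]$ is associative, it is enough to check $xf=fx$, whereas you verify commutation with an arbitrary $g$ by an explicit coefficient comparison (imported from the proof of Theorem~\ref{thm9Tumwesigye}); your version is more thorough, and arguably more careful, since formal power series are not algebraically generated by $\mathcal{A}$ and $x$.
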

\begin{proof}
Let $f=\sum\limits_{n=0}^{\infty}f_nx^n$ be an element in $Z\qty(\mathcal{A}[x;\tilde{\sigma}]).$ Then $f\in C(\mathcal{A}),$ that is $f_n(x)=0$ for every $x\in Sep^n(x).$ Since $\mathcal{A}[x;\tilde{\sigma}]$ is associative, it is enough to derive conditions under which $xf=fx.$ Now 
$$fx=\qty(\sum_{n=0}^{\infty}f_nx^n)x=\sum_{n=0}^{\infty}f_nx^{n+1}.$$ On the other hand,
\begin{align*}
xf&=x\sum_{n=0}^{\infty}f_nx^n\\
&=\sum_{n=0}^{\infty}xf_n x^n\\
&=\sum_{n=0}^{\infty}\tilde{\sigma}(f_n)x^{n+1}.
\end{align*}
From which we obtain that $xf=fx$ if and only if $\tilde{\sigma}(f_n)=f_n.$ Therefore,
$$Z\qty(\mathcal{A}[x;\tilde{\sigma}])=\qty{\sum_{n=0}^{\infty}f_nx^n:\text{ where }f_n=0 \text{ on }Sep^n(x) \text{ and }\tilde{\sigma}(f_n)=f_n}.$$
\end{proof}{\flushright{\qed}}
\section{The skew-Laurent ring $\mathcal{A}[x,\ x^{-1};\tilde{\sigma}]$}\label{sec7Tumwesigye}
The fact that $\tilde{\sigma}$ is an automorphism of $A$ naturally leads us to the consideration of the skew-Laurent ring $\mathcal{A}[x,\ x^{-1};\tilde{\sigma}].$
\begin{definition}
Let $R$ be a ring and $\sigma$ an automorphism of $R.$ By a skew-Laurent ring $R[x,x^{-1};\sigma]$ we mean that
\begin{enumerate}
\item $R[x,\ x^{-1};\sigma]$ is a ring, containing $R$ as a subring,
\item $x$ is an invertible element of $R[x,\ x^{-1};\sigma]$,
\item $R[x,\ x^{-1};\sigma]$ is a free left $R-$module with basis $\qty{1,x,x^{-1},x^2,x^{-2},\cdots}$,
\item $xr=\sigma(r)x,$ $\qty(\text{and } x^{-1}r=\sigma^{-1}(r)x^{-1})$ for all $r\in R.$
\end{enumerate}
\end{definition}
As before, we let $X=[n](=\{1,2,\cdots,n\})$ and let $\mathcal{A}=\qty{f:X\to \mathbb{R}}$ denote the unital algebra of real-valued functions on $X$ with respect to the usual pointwise operations. Let $\sigma :X\to X$ be a bijection such that $\mathcal{A}$ is invariant under $\sigma$, (that is $\sigma$ is a permutation on $X$) and let $\tilde{\sigma}:\mathcal{A}\to \mathcal{A}$ be the automorphism induced by $\sigma,$ as defined by equation \eqref{eqn13Tumwesigye}.
 \par 
Consider the skew-Laurent ring $\mathcal{A}[x,\ x^{-1};\tilde{\sigma}],$ that is the set 
$$
\qty{\sum_{n\in \mathbb{Z}}f_nx^n\ :\ f_n\in \mathcal{A}\text{ and }f_n=0\text{ for all except finitely many }n}
$$
with pointwise addition and multiplication determined by the relations
$$xf=\tilde{\sigma}(f)x\qquad \text{and }x^{-1}f=\tilde{\sigma}^{-1}x^{-1}.$$
In the next Theorem, we give the description of the centralizer of $\mathcal{A}$ in the skew-Laurent ring $\mathcal{A}[x,\ x^{-1};\tilde{\sigma}].$ 
\begin{theorem}\label{thm12Tumwesigye}
The centralizer of $\mathcal{A}$ in the skew-Laurent  extension $\mathcal{A}[x,\ x^{-1};\tilde{\sigma}]$ is given by
$$
C(\mathcal{A})=\qty{\sum_{n\in \mathbb{Z}}f_nx^n\ :\ f_n=0\qq{ on } Sep^n(X)}
$$
where $Sep^k(X)$ is as given in Definition \ref{defn3Tumwesigye}.
\end{theorem}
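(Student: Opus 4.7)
The plan is to mimic the proof of Theorem \ref{thm10Tumwesigye}, with the only new ingredient being that we must push $g$ past negative powers of $x$. First I would take an arbitrary element $f = \sum_{n \in \mathbb{Z}} f_n x^n$ of $\mathcal{A}[x, x^{-1}; \tilde{\sigma}]$ (with only finitely many $f_n$ nonzero) and an arbitrary $g \in \mathcal{A}$, and compare $fg$ and $gf$ in the left $\mathcal{A}$-basis $\{x^n : n \in \mathbb{Z}\}$ guaranteed by the definition of the skew-Laurent ring.

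Next I would extend the defining relations. From $xr = \tilde{\sigma}(r)x$ and $x^{-1}r = \tilde{\sigma}^{-1}(r)x^{-1}$, a straightforward induction gives $x^k r = \tilde{\sigma}^k(r) x^k$ for every $k \geq 0$ and $x^{-k} r = \tilde{\sigma}^{-k}(r) x^{-k}$ for every $k \geq 0$, so uniformly $x^n g = \tilde{\sigma}^n(g) x^n$ for all $n \in \mathbb{Z}$. Consequently
\[
fg = \sum_{n \in \mathbb{Z}} f_n \tilde{\sigma}^n(g) x^n, \qquad gf = \sum_{n \in \mathbb{Z}} g f_n x^n,
\]
and matching coefficients forces $g f_n = f_n \tilde{\sigma}^n(g)$ for every $n \in \mathbb{Z}$ and every $g \in \mathcal{A}$.

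Using commutativity of $\mathcal{A}$, this rewrites as $f_n \cdot (g - \tilde{\sigma}^n(g)) = 0$. Evaluating pointwise at $y \in X$ gives $f_n(y)(g(y) - g(\sigma^{-n}(y))) = 0$. When $y \in Per^n(X)$, equivalently $\sigma^{-n}(y) = y$, the bracket vanishes automatically and $f_n(y)$ is unconstrained; when $y \in Sep^n(X)$, choosing $g$ to be the characteristic function of $y$ makes the bracket equal to $1$, forcing $f_n(y) = 0$. The converse direction is immediate: if each $f_n$ vanishes on $Sep^n(X)$, then the pointwise identity holds for every $y$ and every $g$, so $f \in C(\mathcal{A})$.

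There is no genuine obstacle here; the argument is essentially a verbatim adaptation of Theorem \ref{thm10Tumwesigye}. The only mildly delicate point is notational, namely that Definition \ref{defn3Tumwesigye} is stated for nonzero integers: one should observe that $\sigma^n(y) = y$ iff $\sigma^{-n}(y) = y$, so $Sep^n(X) = Sep^{-n}(X)$ and the characterization is unambiguous for negative indices, while for $n = 0$ the condition on $f_0$ is vacuous (reflecting $\mathcal{A} \subseteq C(\mathcal{A})$). Existence of the skew-Laurent extension is precisely what allows negative exponents, and the fact that $\tilde{\sigma}$ is an automorphism guarantees that the basis property used in comparing coefficients holds on both sides.
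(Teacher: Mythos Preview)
Your argument is correct and follows essentially the same route as the paper's proof: compute $fg$ and $gf$ using $x^{n}g=\tilde{\sigma}^{n}(g)x^{n}$, match coefficients in the basis $\{x^{n}\}$, and deduce that $f_n$ must vanish on $Sep^{n}(X)$. You are simply a bit more explicit than the paper about the induction for negative powers, the pointwise verification with characteristic functions, and the $n=0$/sign conventions.
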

\begin{proof}
Let $f=\sum\limits_{n\in \mathbb{Z}}f_nx^n\in \mathcal{A}[x,\ x^{-1};\tilde{\sigma}]$ be an element which belongs to $C(\mathcal{A}).$ Then $fg=gf$ should hold for every $g\in \mathcal{A}.$
Now, $$gf=g\sum\limits_{n\in \mathbb{Z}}f_nx^n=\sum\limits_{n\in \mathbb{Z}}gf_nx^n.$$
On the other hand,
$$fg=\qty(\sum\limits_{n\in \mathbb{Z}}f_nx^n)g=\sum\limits_{n\in \mathbb{Z}}f_n\qty(x^ng)=\sum\limits_{n\in \mathbb{Z}}f_n\tilde{\sigma}^n(g)x^n.$$
Therefore, $gf=fg$ if and only if 
$$gf_n=f_n\tilde{\sigma}^n(g)$$ for all $n\in \mathbb{Z}.$ Since $\mathcal{A}$ is commutative, then the above equation holds on $Per^n(X).$ Therefore,
$$
C(\mathcal{A})=\qty{\sum_{n\in \mathbb{Z}}f_nx^n\ :\ f_n=0\qq{ on }Sep^n(X)}.
$$
\end{proof}
{\flushright{\qed}}
\subsubsection*{Acknowledgement}
This research was supported by the Swedish International Development Cooperation Agency (Sida), International Science Programme (ISP) in Mathematical Sciences (IPMS), Eastern Africa Universities Mathematics Programme (EAUMP).  Alex Tumwesigye is also grateful to the research environment Mathematics and Applied Mathematics (MAM), Division of Applied Mathematics, M\"alardalen University for providing an excellent and inspiring environment for research education and research.
%

\end{document}